\newtheorem{theorem}{Theorem}
\newtheorem{lemma}[theorem]{Lemma}
\newtheorem{proposition}[theorem]{Proposition}
\newcommand{\FT}[0]{\mathbb{F}_2}
\begin{document}
\title[Normal numbers and nested perfect necklaces]{Normal numbers and 
nested perfect necklaces}
% Alternative
%\title{Normal numbers, low discrepancy \\
% and nested perfect necklaces}

\author{Ver\'onica Becher \and Olivier Carton}
\date{May 7, 2018}
\maketitle

\begin{abstract}
  M. B. Levin used Sobol-Faure low discrepancy sequences with Pascal
  matrices modulo~$2$ to construct, for each integer~$b$, a real number $x$
  such that the first~$N$ terms of the sequence $(b^n x \mod 1)_{n\geq 1}$
  have discrepancy $O((\log N)^2/N)$.  This is the lowest discrepancy known
  for this kind of sequences.  In this note we characterize Levin's
  construction in terms of nested perfect necklaces, which are a variant of
  the classical de Bruijn necklaces.  Moreover, we show that every real
  number $x$ whose base~$b$ expansion is the concatenation of nested
  perfect necklaces of exponentially increasing order satisfies that the
  first $N$ terms of \mbox{$(b^n x \mod 1)_{n\geq 1}$} have discrepancy
  $O((\log N)^2/N)$.  For base~$2$ and the order being a power of~$2$, we
  give the exact number of nested perfect necklaces and an explicit method
  based on matrices to construct each of them.
\end{abstract}
\bigskip

\noindent
\textbf{Mathematics Subject Classification:}
% combinatorics on words
68R15,
% normal numbers
11K16,
% distribution modulo 1,  discrepancy
11K38

%\tableofcontents

\section{Introduction and statement of results}

For a sequence $(x_n)_{n\geq 0}$ of real numbers in the unit interval the
discrepancy of the first~$N$ elements~is
\begin{displaymath}
  D_N((x_n)_{n\geq 0}) =
  \sup_{0\leq \alpha < \beta \leq 1}\left| \frac{1}{N} \#\Big
    \{ n : 0 \leq n < N  \text{ and } \alpha \leq x_n < \beta \Big\} -
    (\beta-\alpha)
  \right|.
\end{displaymath}
In~\cite{Schmidt1972} Schmidt showed that there is a constant $C$ such that
for \emph{every} sequence~$(x_n)_{n\geq 0}$ of real numbers there are
infinitely many $N$s such that
\begin{displaymath}
  D_N((x_n)_{n\geq 0}) >C \frac{\log N}{N}.
\end{displaymath}
This is an optimal order of discrepancy since this lower bound is achieved
by van der Corput sequences,
see~\cite{KuipersNiederreiter2006,DrmotaTichy1997,Bugeaud2012}.

The property of Borel normality for real numbers can be defined in terms of
uniform distribution.  A sequence $(x_n)_{n\geq 0}$ of real numbers in the
unit interval is uniformly distributed exactly when
$\lim_{N\to\infty} D_N((x_n)_{n\geq 0})=0$.  We write $\{x\} $ to denote
$x-\lfloor x\rfloor$, the fractional part of $x$.  For an integer $b$
greater than $1$, a real number~$x$ is normal to base~$b$ if the sequence
$(\{b^n x\})_{n\geq 0}$, is uniformly distributed.  It is still unknown
whether the optimal order of discrepancy can also be achieved by a sequence
of the form~$(\{b^n x\})_{n\geq 0}$ for some real number~$x$
\cite{Korobov1955,DrmotaTichy1997,Bugeaud2012}.  

The lowest discrepancy
known for sequences of this form is $O((\log N)^2/N)$ and it holds for a
real number~$x$ constructed by Levin in~\cite{Levin1999}.  Given an
arbitrary integer base~$b$, Levin's construction uses Sobol-Faure sequences
with the Pascal triangle matrix modulo~$2$,
see~\cite{Levin1995,Levin1999,Faure1982}.

Our first result in this note is a characterization of Levin's construction
in terms of combinatorics of words, showing that it is a concatenation of
what we call \emph{nested perfect necklaces} of increasing order.  Perfect
necklaces were introduced in~\cite{AlBeFeYu2016} as a variant of the
classical de Bruijn necklaces~\cite{deBruijn1946}.  Fix an alphabet $A$. A
word is a finite sequence of symbols and a necklace, or circular word, is
the equivalence class of a word under rotations.  For positive integers $k$
and~$m$, we call a necklace $(k, m)$-perfect if each word of length~$k$
occurs exactly $m$ times at positions which are different modulo $m$ for
any convention on the starting point.  The length of a $(k,m)$-perfect
necklace is $m|A|^k$ where $|A|$ denotes the cardinality of the
alphabet~$A$.  In this note we always consider the modulo~$m$ being a power
of~$2$.

Notice that for $m = 1$, the $(k,m)$-perfect necklaces are exactly the de
Bruijn necklaces of order~$k$.  For the binary alphabet the word $0011$ is
a $(1,2)$-perfect necklace.  Both words $00110110$ and $00011011$ are
$(2,2)$-perfect necklaces.  The segments in Champernowne sequence which are
the concatenation in lexicographic order of all words of length~$k$ is a
$(k,k)$-necklace. For instance the following word is a $(3,3)$-perfect
necklace  (the spacing is just for the readers convenience),
\begin{displaymath}
  000 \ 001\ 010 \ 011 \ 100 \ 101 \ 110 \ 111
\end{displaymath}
More generally, every arithmetic sequence with difference coprime 
with the alphabet size yields a perfect necklace.

A word~$w$ is a \emph{$(k,m)$-nested perfect necklace} if for each integer
$\ell =1,2, \ldots, k$, each block of~$w$ of length $m|A|^\ell$ which starts
at a position congruent to~$1$ modulo~$m|A|^\ell$ is a $(\ell,m)$-perfect
necklace.  An alternative recursive definition of nested perfect necklaces
is as follows.  A word~$w$ is a $(k,m)$-nested perfect necklace if, first,
it is a $(k,m)$-perfect necklace; and, second, either $k = 1$ or whenever
$w$ is factorized $w = w_1 \cdots w_{|A|}$ with each word~$w_i$ of length
$m|A|^{k-1}$, then each word~$w_i$ is a $(k-1,m)$-nested perfect necklace.

Notice that each $(k,m)$-nested perfect necklace is not an equivalence
class closed under rotations, but it is a single word, with a unique
initial position.  The word $00110110$ is a $(2,2)$-nested perfect necklace
because it is a $(2,2)$-perfect necklace and both words $0011$ and $0110$
are $(1,2)$-perfect necklaces.  The four words
\begin{gather*}
  0000111101011010 \\
  0011110001101001 \\
  0001111001001011 \\
  0010110101111000
\end{gather*}
are $(2,4)$-nested perfect necklaces. Both the concatenation of the first
two and the concatenation of the last two are $(3,4)$-nested perfect
necklaces.  The concatenation of all of them is a $(4,4)$-nested perfect
necklace.  The concatenation of all words of the same length in
lexicographic order yields a perfect necklace that is not a nested perfect
necklace.

The statement of our first result is as follows.
\begin{theorem}\label{thm:1}
  For each base $b$ the number $x$ defined by Levin
  in~\cite[Theorem~2]{Levin1999} using the Pascal triangle matrix modulo~$2$ is
  obtained as the concatenation of the $(m,m)$-nested perfect necklaces
  for $m=2^d$ with $d=0,1,2,\ldots$.  Conversely, for every
  number~$x$ whose base~$b$ expansion is the concatenation of
  $(m,m)$-nested perfect necklaces for $m = 2^d$ with $d = 0,1,2\ldots$,
  the discrepancy $D_N((\{b^n x\})_{n\geq 1})$ is $O((\log N)^2/N)$.
\end{theorem}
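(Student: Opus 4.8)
The plan is to treat the two implications separately, since they are of very different character: the forward direction (that Levin's number is the concatenation of nested perfect necklaces) is essentially a finite linear-algebra verification, whereas the converse (the discrepancy bound) carries the analytic weight and is where I expect the real difficulty.

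For the forward direction, I would first unfold Levin's digital construction. The digits of $x$ inside the aligned block of length $mb^{m}$ with $m=2^{d}$ are obtained by applying the Pascal triangle matrix modulo~$2$ to the digit-representation of the position index, so each output digit is an $\FT$-linear form in the digits of the index. Perfectness of each aligned block then reduces to the nonsingularity over $\FT$ of the relevant submatrices of the Pascal matrix: because the Pascal matrix modulo~$2$ is lower unitriangular, every leading principal submatrix is invertible, and this forces each word of the prescribed length to occur the correct number of times at positions that are distinct modulo~$m$. The \emph{nested} property comes for free from the self-similarity of the Pascal matrix, whose leading principal submatrices are again Pascal matrices: each aligned sub-block of length $mb^{\ell}$ is produced by the order-$\ell$ instance of the very same construction and is therefore an $(\ell,m)$-perfect necklace.

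For the converse, write $w$ for the base-$b$ expansion of $x$ and $w_N$ for its prefix of length $N$. I would first pass from the discrepancy of $(\{b^{n}x\})_{n<N}$ to a $b$-adic block-count discrepancy, using that the event $\{b^{n}x\}\in[\alpha,\beta)$ is, up to approximating the endpoints by $b$-adic cylinders of rank~$k$, the event that the length-$k$ factor of $w$ starting at position $n+1$ equals a prescribed word. Approximating $[\alpha,\beta)$ from inside and outside by unions of rank-$k$ cylinders, summing the resulting errors over the scales $k=1,\dots,K$ with $K=\lceil\log_{b}N\rceil$, and absorbing the truncation error $O(Nb^{-K})=O(1)$, gives a bound of the shape $N D_N \le C\sum_{k=1}^{K}\Delta_k + O(1)$, where $\Delta_k=\max_{u\in A^{k}}\bigl|\,|w_N|_u - N b^{-k}\,\bigr|$ and $|w_N|_u$ counts the occurrences of $u$ in $w_N$. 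The outer sum over the $K=O(\log N)$ scales produces one of the two logarithmic factors, so everything reduces to proving the block estimate $\Delta_k=O(k)$ uniformly for $k\le K$, which yields $\sum_{k\le K}\Delta_k=O(K^{2})=O((\log N)^{2})$ and hence the theorem. To establish $\Delta_k=O(k)$ I would split $w_N$ into the completed necklaces $N_0,\dots,N_{d-1}$ and a prefix of the current necklace $N_d$, noting that $d=O(\log\log N)$ and that the order $2^{d}$ of $N_d$ is comparable to $K$; on each completed $(2^{j},2^{j})$-perfect necklace the cyclic count of any length-$k$ word is exactly uniform, so its linear count deviates by only $O(k)$, while the nested hypothesis is what controls the incomplete final necklace, whose prefix decomposes into aligned $(\ell,2^{d})$-perfect sub-necklaces of decreasing order, each again exactly uniform at scale~$k$ in its interior.

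The step I expect to be the main obstacle is precisely this estimate $\Delta_k=O(k)$. The danger is that the $O(\log\log N)$ junctions between consecutive necklaces, together with the many levels arising in the recursive decomposition of the incomplete final necklace, accumulate into an extra logarithmic (or $\log\log N$) factor and spoil the clean $(\log N)^{2}$. The heart of the argument must therefore be a careful bookkeeping showing that these boundary and truncation errors \emph{telescope} rather than add up, so that at each scale~$k$ the incomplete tail contributes no more than a single short perfect necklace would; the nested structure is exactly the property that makes this telescoping possible, and getting the constants right here is where I anticipate the bulk of the work.
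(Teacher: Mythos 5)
Your proposal inverts where the difficulty actually lies in the paper (there the forward direction is the real work and the converse is dispatched by citation), and it has a genuine gap in each half. In the forward direction your linear-algebra justification is wrong: the submatrices that govern occurrences are not leading principal ones, and unitriangularity is not what makes them invertible. An occurrence of a length-$k$ word at offset $\ell$ (with $\ell+k\le m$) inside a single block $Mw_i$ is controlled by the submatrix on rows $\ell+1,\dots,\ell+k$ and the \emph{last} $k$ columns of $M$ --- the columns acting on the low-order index bits, which are the only bits that vary within the block. For a generic unitriangular matrix that submatrix is singular (for the identity matrix it has zero rows whenever $\ell+k<m$); its invertibility here is a specific consequence of the Pascal recurrence $M_{i,j}=M_{i+1,j}\oplus M_{i,j+1}$, which is the content of Lemma~\ref{lem:invert-right-sm}. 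More seriously, you never treat offsets with $\ell+k>m$, where the occurrence straddles two consecutive blocks $Mw_q$ and $Mw_{q+1}$: this produces two coupled linear systems, one in the bits of $q-1$ and one in the bits of $q$, which must be solved jointly, bit by bit from the right (using that adding $1$ propagates carries from right to left), together with a second invertibility statement for submatrices anchored on the upper border of the rotated Pascal matrices (Lemma~\ref{lem:invert-top-sm}). This straddling case is the bulk of the proof of Proposition~\ref{pro:aff2nes}, and without it you have not shown that the blocks are perfect necklaces at all. Your claim that nestedness ``comes for free from self-similarity'' is likewise off: the aligned sub-block of length $m2^{\ell}$ is the image under the same $m\times m$ matrix of the words sharing a fixed prefix, i.e., an affine image, not the order-$\ell$ instance of the construction, and it is handled by the same two-case argument.

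In the converse direction your reduction to block counts and the budget $\sum_{k\le K}\Delta_k$ with $K\approx\log_b N$ has the right shape, but the entire analytic content is the estimate $\Delta_k=O(k)$ uniformly in $k\le K$, which you do not prove and yourself flag as likely to fail through accumulation over the $O(\log\log N)$ completed necklaces and over the levels of the recursive decomposition of the incomplete one. So your plan leaves open precisely the step that carries the theorem. The paper does not redo any of this analysis: it observes that Levin's own chain of estimates (his Lemma~5, Corollaries~1 and~2, and the end of the proof of his Theorem~2 in \cite{Levin1999}) uses nothing about $x$ beyond the fact that its expansion splits into consecutive $(m,m)$-nested perfect necklaces for $m=2^d$, $d=0,1,2,\dots$, and hence applies verbatim to any such $x$. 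If you want a self-contained proof you must actually carry out the bookkeeping you only sketch; as it stands, neither half of your argument is complete, and the forward half contains an incorrect step rather than merely a missing one.
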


To state the second result, we consider the field with two elements $\FT$
and we introduce a family of $2^{m-1}$ matrices of dimension $m \times m$
over $\FT$ obtained by rotating the columns of the Pascal triangle matrix
modulo~$2$.  We identify words of two symbols with vectors,
% $\oplus$ is the addition of vectors)
for each matrix~$M$ of this family, we construct a
nested perfect necklace  by concatenating the words of
the form $Mw \oplus z$
% (here words are identified with vectors and $\oplus$ is the addition of vectors)
 where $z$ is a fixed word over~$\FT$ of length~$m$ and $w$ ranges over all 
words over~$\FT$ of length~$m$ in lexicographic order. 
 Such a necklace is called an \emph{affine necklace}.
Our second result is as follows.

\begin{theorem}\label{thm:2}
  For each $m=2^d$ with $d=0,1,2,\ldots$ there are $2^{2m-1}$ binary
  $(m,m)$-nested perfect necklaces and they are exactly the affine
  necklaces.
\end{theorem}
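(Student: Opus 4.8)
The plan is to set up a correspondence between binary $(\ell,m)$-nested perfect necklaces and pairs $(M,z)$, where $z\in\FT^m$ is a shift and $M$ is an $m\times\ell$ matrix obtained by rotating the last $\ell$ columns of the Pascal matrix, and to run the whole argument by induction on $\ell$ from $1$ up to $m$. The first reduction I would make is a reformulation of the perfect property in terms of the length-$m$ mini-blocks $B_0,\dots,B_{2^\ell-1}\in\FT^m$ into which the necklace splits: reading circularly, the necklace is $(\ell,m)$-perfect exactly when, for every offset $j\in\{0,\dots,m-1\}$, the $2^\ell$ length-$\ell$ windows that start at offset $j$ (one in each mini-block) are pairwise distinct and hence exhaust $\FT^\ell$. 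This is immediate from counting: there are $m\,2^\ell$ windows and $2^\ell$ possible contents, the $m$ occurrences of a fixed word must lie in distinct residues modulo $m$, and each residue class is precisely the set of offset-$j$ windows.

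For the direction that affine necklaces are nested perfect I would argue by induction on $\ell$. Splitting an affine necklace for an $m\times\ell$ matrix $M$ into its two halves amounts to fixing the top bit of $\mathrm{bin}(t)$, so each half is again an affine necklace for the $m\times(\ell-1)$ matrix formed by the last $\ell-1$ columns of $M$, with shifts $z$ and $z\oplus(\text{first column})$; by the induction hypothesis each half is $(\ell-1,m)$-nested perfect, so by the recursive definition it remains only to verify that the whole word is $(\ell,m)$-perfect. Using the reformulation, I must check for every offset $j$ that the map $t\mapsto(\text{offset-}j\text{ window content})$ is a bijection. When the window does not cross a mini-block boundary this map is affine in $\mathrm{bin}(t)$ with linear part a contiguous $\ell\times\ell$ row-submatrix of $M$, so bijectivity is exactly nonsingularity of that submatrix, which holds for the Pascal rotations.

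The hard part is the straddling offsets, where the window reads the bottom rows of mini-block $B_t$ together with the top rows of $B_{t+1}$: here $\mathrm{bin}(t+1)=\mathrm{bin}(t)\oplus\delta(t)$ with $\delta(t)$ the all-ones suffix determined by the number of trailing ones of $t$, so the content acquires a genuinely nonlinear term $M\delta(t)$ and the map is no longer affine in $t$. The crux of the whole theorem is to show this map is still a bijection; I expect to do so by exploiting the self-similar (Sierpinski) recurrence of the Pascal matrix, which is precisely what matches the binary carry, so that the straddle acts as an invertible linear change of variables on $\FT^\ell$. The rotations of the columns are exactly the $2^{m-1}$ choices for which every such straddling submatrix, as well as every contiguous submatrix, is nonsingular.

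For the converse and the count I would again induct on $\ell$, turning the reformulation into a rigidity statement. Given a $(\ell,m)$-nested perfect necklace $w=w_1w_2$, the induction hypothesis presents each half as an affine necklace $(M_i,z_i)$ of order $\ell-1$; imposing $(\ell,m)$-perfectness through the offsets whose windows straddle the boundary between $w_1$ and $w_2$ forces $M_1=M_2$ and pins down the new column up to exactly two admissible rotations, while $z_1$ is free and $z_2$ is then determined. This yields the recursion $P(\ell)=2\,P(\ell-1)$ for the number $P(\ell)$ of $(\ell,m)$-nested perfect necklaces; the base case $P(1)=2^m$ holds because a $(1,m)$-perfect necklace is forced to have the form $w_1\overline{w_1}$, which is the affine necklace whose single column is the all-ones vector and whose shift $w_1$ is arbitrary, so that $P(m)=2^{m-1}\,2^m=2^{2m-1}$. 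Finally, distinct pairs $(M,z)$ give distinct affine necklaces, since the word recovers $z$ as its first mini-block and recovers each column of $M$ from the mini-blocks indexed by powers of two; hence there are exactly $2^{m-1}\,2^m=2^{2m-1}$ affine necklaces, all nested perfect by the forward direction. Matching the two counts shows the affine necklaces are exactly the $(m,m)$-nested perfect necklaces, and the main obstacle throughout is the straddling/carry compatibility of the Pascal matrix that underlies both the perfectness and the rigidity.
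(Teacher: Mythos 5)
Your architecture is in fact the same as the paper's --- forward inclusion by linear algebra, a count of affine necklaces by the parametrization $(M,z)$, an ``at most two extensions'' rigidity statement, and a squeeze by induction --- and several of your components are correct and match steps in the paper: the offset-window reformulation of perfectness, the splitting of an affine necklace into two affine halves, the non-straddling case via invertibility of contiguous row-submatrices (Lemma~\ref{lem:invert-right-sm}), the injectivity of $(M,z)\mapsto\text{necklace}$ (Lemma~\ref{lem:mm-necklaces}), and the base case $\ell=1$ (Lemma~\ref{lem:1m-necklaces}). However, the two steps that you yourself call the crux are only announced, not proved, and they are exactly where the theorem's difficulty lives. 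For the straddling offsets you say you ``expect'' bijectivity because the Sierpinski recurrence matches the binary carry, ``so that the straddle acts as an invertible linear change of variables on $\FT^\ell$.'' That cannot be taken at face value: the correction term $M\delta(t)$ depends on the number of trailing ones of $t$ (for $\ell\ge 3$ the successor map is not affine over $\FT$), so the window map is only piecewise affine and no single linear substitution linearizes it. The paper's actual argument (Proposition~\ref{pro:aff2nes}, case $\ell+k>m$) is a sequential bit-extraction: the occurrence is written as two linear systems, one in $\mathrm{bin}(t)$ and one in $\mathrm{bin}(t+1)$; the bits are determined from least significant upward, at each step using whichever of the upper or lower border of the rotated Pascal matrix makes a non-horizontal step, which yields a row-equation of the shape $(0,\ldots,0,1,*,\ldots)$; the carry enters only through the fact that the $i$ rightmost bits of $t$ and of $t+1$ determine each other; and the process closes by inverting a border-anchored submatrix, which is a separate nontrivial lemma (Lemma~\ref{lem:invert-top-sm}). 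Nothing in your sketch substitutes for this argument.

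The second gap is the rigidity claim in your converse: that the straddling constraints ``force $M_1=M_2$ and pin down the new column up to exactly two admissible rotations,'' whence $P(\ell)=2P(\ell-1)$. This is asserted without proof, and a direct attack meets the same carry nonlinearity as above. The paper proves the corresponding bound --- for a given first half there are at most two admissible second halves (Lemma~\ref{lem:extend2}) --- by a completely different, graph-theoretic device: order-$(\ell-1)$ perfect necklaces are Hamiltonian cycles and order-$\ell$ perfect necklaces are Eulerian cycles in a de Bruijn-like graph; deleting the Hamiltonian cycle of the first half leaves a graph with out-degree one at every vertex, so all valid second halves are cyclic rotations of a single word; and a separate rigidity lemma (Lemma~\ref{lem:affine-conj}, resting on Lemma~\ref{lem:right-comb}) shows that among the rotations of an affine necklace by multiples of $m$, only the rotation by half its length can again be affine. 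Until you either reproduce an argument of this kind or genuinely carry out the linear-algebraic rigidity you allude to, neither the recursion $P(\ell)=2P(\ell-1)$, nor the count $2^{2m-1}$, nor the equality between nested perfect and affine necklaces follows. (A minor point: your parenthetical claim that the column rotations are \emph{exactly} the matrices making all contiguous and straddling submatrices nonsingular is an unproved overclaim, but it is also never used, so it does no harm beyond suggesting more is known than is established.)
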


The rest of this note is devoted to the proofs.  For the proof of
Theorem~\ref{thm:1}, first notice that Levin's construction in his
Theorem~2 in \cite{Levin1999} is the concatenation of blocks obtained using
Pascal triangle matrix modulo~$2$ for increasing $m = 2^d$ with
$d = 0,1,2,\ldots$.  Hence, each block is an $(m,m)$-affine necklace and,
by Proposition~\ref{pro:aff2nes}, each block is an $(m,m)$-nested perfect
necklace.  Conversely, assume that the expansion in base~$b$ of a given
real~$x$ can be split in consecutive blocks such that each block is an
$(m,m)$-nested perfect necklace for $m = 2^d$ with $d = 0,1,2,\ldots$.
Then, Levin's chain of estimates in \cite{Levin1999} yield the wanted
discrepancy: his Lemma~5, Corollaries 1 and~2 and the end of the proof of
his Theorem~2.

The proof of Theorem~\ref{thm:2} follows from Propositions~\ref{pro:aff2nes}, 
\ref{prop:nbr-nested} and~\ref{pro:nes2aff}.

\section{Affine necklaces}

In this note we consider transformations on words obtained as linear maps
over the field~$\FT$ with two elements.  We identify the words of
length~$n$ over~$\FT$ with the column vectors of dimension $n \times 1$
over~$\FT$.  More precisely, we always identify the word $a_1 \cdots a_n$
where $a_i \in \FT$ with the column vector
$(a_1,\ldots, a_n)^t \in (\FT)_{n\times 1}$ where ${}^t$ denotes transpose
of vectors and matrices. Suppose $w_1,\ldots,w_k$ is a sequence of words,
each of them of length~$n$ and $M$ is a $n \times n$-matrix over~$\FT$, we
may consider the concatenation $(Mw_1)(Mw_2) \cdots (Mw_k)$.  In this
writing, the matrix~$M$ is multiplied with each word~$w_i$ considered as a
column vector, and the resulting column vector is viewed again as a word of
length~$n$.
Similarly, the component-wise sum of vectors in~$\FT$
is used directly on words of the same length.  It is denoted by the
symbol~$\oplus$.

We assume that the alphabet is $\FT = \{0, 1\}$ and that the modulo~$m$ is
always a power of~$2$, namely $m = 2^d$ for some non-negative integer~$d$.
We now define a family of matrices that we will use to construct explicitly
some nested perfect necklaces.  We start by defining by induction on~$d$ an
$m \times m$-matrix~$M_d$ for each $d \ge 0$ by
\begin{displaymath}
  M_0 = (1)
  \quad\text{and}\quad
  M_{d+1} = \begin{pmatrix} M_d & M_d \\ 0  & M_d \end{pmatrix}.
\end{displaymath}

The matrices $M_1$ and $M_2$ are then 
\begin{displaymath}
  M_1 = \begin{pmatrix}
    1 & 1 \\
    0 & 1
  \end{pmatrix} 
  \quad\text{and}\quad
  M_2 = \begin{pmatrix}
    1 & 1 & 1 & 1 \\
    0 & 1 & 0 & 1 \\
    0 & 0 & 1 & 1 \\
    0 & 0 & 0 & 1
  \end{pmatrix}.
\end{displaymath}
\medskip

The matrix~$M_d$ is a variant of the Pascal triangle modulo~$2$
in rectangular form,  we prove it in Lemma~\ref{lem:pascal} below.  
This matrix is almost the one used by Levin in~\cite{Levin1999} because we have
reversed the order of the columns.  This definition of the matrix $M_d$
 allows us to identify words with column vectors, which is not the case in~\cite{Levin1999}.

We now introduce a family of  matrices obtained by applying some rotations to
columns of the matrix~$M_d$. 
% We start by introducing the rotation~$\sigma$.
Let $\sigma$ be the function which maps each word
$a_1 \cdots a_n$ to $a_na_1a_2 \cdots a_{n-1}$ obtained by moving the last symbol
to the front.  Since words over~$\FT$ are identified with column vectors, the
function~$\sigma$ can also be applied to a column vector.

Let $n_1,\ldots,n_m$ be a sequence of integers such that $n_m = 0$ and
$n_{i+1} \le n_i \le n_{i+1}+1$ for each integer $1 \le i <m$.  
Let $C_1,\ldots,C_m$ be the columns of $M_d$, that is, 
$M_d= (C_1,\ldots,C_m)$.
Define
\[
  M_d^{n_1,\ldots,n_m}= \bigl(\sigma^{n_1}(C_1),\ldots,\sigma^{n_m}(C_m)\bigr).
\]  
The following are  the eight possible matrices $M_d^{n_1,\ldots,n_m}$
 for $d = 2$ and $m = 2^2$.

\begin{displaymath}
  \begin{array}{cccc} 
    M_4^{0,0,0,0} & M_4^{1,0,0,0} & M_4^{1,1,0,0} & M_4^{2,1,0,0} \\
    \begin{pmatrix}
      1 & 1 & 1 & 1 \\
      0 & 1 & 0 & 1 \\
      0 & 0 & 1 & 1 \\
      0 & 0 & 0 & 1
    \end{pmatrix}
    & 
    \begin{pmatrix}
      0 & 1 & 1 & 1 \\
      1 & 1 & 0 & 1 \\
      0 & 0 & 1 & 1 \\
      0 & 0 & 0 & 1
    \end{pmatrix} 
    & 
    \begin{pmatrix}
      0 & 0 & 1 & 1 \\
      1 & 1 & 0 & 1 \\
      0 & 1 & 1 & 1 \\
      0 & 0 & 0 & 1
    \end{pmatrix}
    & 
    \begin{pmatrix}
      0 & 0 & 1 & 1 \\
      0 & 1 & 0 & 1 \\
      1 & 1 & 1 & 1 \\
      0 & 0 & 0 & 1
    \end{pmatrix} \\[10mm]
    M_4^{1,1,1,0} & M_4^{2,1,1,0} & M_4^{2,2,1,0} & M_4^{3,2,1,0} \\
    \begin{pmatrix}
      0 & 0 & 0 & 1 \\
      1 & 1 & 1 & 1 \\
      0 & 1 & 0 & 1 \\
      0 & 0 & 1 & 1
    \end{pmatrix}
    & 
    \begin{pmatrix}
      0 & 0 & 0 & 1 \\
      0 & 1 & 1 & 1 \\
      1 & 1 & 0 & 1 \\
      0 & 0 & 1 & 1
    \end{pmatrix} 
    & 
    \begin{pmatrix}
      0 & 0 & 0 & 1 \\
      0 & 0 & 1 & 1 \\
      1 & 1 & 0 & 1 \\
      0 & 1 & 1 & 1
    \end{pmatrix}
    & 
    \begin{pmatrix}
      0 & 0 & 0 & 1 \\
      0 & 0 & 1 & 1 \\
      0 & 1 & 0 & 1 \\
      1 & 1 & 1 & 1
    \end{pmatrix}
  \end{array}
\end{displaymath}
\bigskip

Let $m = 2^d$ for some $d \ge 0$ and let $k$ be some integer such that
$1 \le k \le m$.  Let $w_1,\ldots,w_{2^m}$ be the enumeration in
lexicographic order of all words on length~$m$ over~$\FT$. Let $z$ be a
word over~$\FT$ of length~$m$ and let $w'_i = w_i \oplus z$ for
$1 \le i \le 2^m$.  Let $M$ be a matrix a matrix $M_d^{n_1,\ldots,n_m}$ as
above.  Then, the concatenation 
\begin{displaymath}
  (Mw'_1)(Mw'_2) \cdots (Mw'_{2^k})
\end{displaymath}
is called an \emph{$(k,m)$-affine} necklace.  In the sequel we refer to
this necklace as the affine necklace obtained from the
matrix~$M_d^{n_1,\ldots,n_m}$ and the vector~$z$.  Note that setting
$z' = Mz$ gives $Mw'_i = Mw_i \oplus z'$ which justifies the terminology.
In Lemma~\ref{lem:invert-right-sm} we will prove that each matrix
$M_d^{n_1,\ldots,n_m}$ is invertible and therefore each vector~$z'$ is
equal to~$Mz$ for some vector~$z$.

Each matrix~$M_d$ is upper triangular, that is $(M_d)_{i,j} = 0$ for
$1 \le i < j \le 2^d$. The following lemma states that the upper part of
the matrix~$M_d$ is the beginning of the Pascal triangle modulo~$2$ also
known as the Sierpi\'nski triangle.

\begin{lemma} \label{lem:pascal}
  For any integers $d,i,j$ such that $d \ge 0$ and $1 \le i,j < 2^d$,
  $(M_d)_{i,j} = (M_d)_{i+1,j} \oplus (M_d)_{i,j+1}$.
\end{lemma}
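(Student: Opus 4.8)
The plan is to proceed by induction on~$d$, using the recursive block definition $M_{d+1} = \begin{pmatrix} M_d & M_d \\ 0 & M_d \end{pmatrix}$. Set $m = 2^d$. Each quadrant of $M_{d+1}$ is read directly off $M_d$: for $1 \le i,j \le m$ one has $(M_{d+1})_{i,j} = (M_d)_{i,j}$ (top-left), $(M_{d+1})_{i,j+m} = (M_d)_{i,j}$ (top-right), $(M_{d+1})_{i+m,j} = 0$ (bottom-left), and $(M_{d+1})_{i+m,j+m} = (M_d)_{i,j}$ (bottom-right). Thus every entry of $M_{d+1}$ equals either $0$ or a single entry of $M_d$, and the recurrence for $M_{d+1}$ should reduce, quadrant by quadrant, to the recurrence for $M_d$.

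The identity expresses $(M_d)_{i,j}$ as a sum of two neighbouring entries, each obtained from $(i,j)$ by moving one step downward and/or one step to the right. As long as these neighbours lie in the same quadrant as $(i,j)$, the identity for $M_{d+1}$ is literally the induction hypothesis for $M_d$, after subtracting~$m$ from the indices where appropriate. The delicate \emph{seam} cases are those in which a neighbour crosses a block boundary, that is, where $i = m$, so a downward step drops into the bottom half, or $j = m$, so a rightward step enters the right half. In these cases the identity collapses to a statement about a border row or column of~$M_d$.

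To control the seams I would strengthen the induction hypothesis to carry, alongside the recurrence itself, the shape of the four borders of~$M_d$: its first row and its last column are constantly~$1$, its last row is $(0,\dots,0,1)$, and its first column is $(1,0,\dots,0)^{t}$. Each of these propagates immediately through the block decomposition: the first row of $M_{d+1}$ is the first row of $M_d$ written twice, its last column is the last column of $M_d$ stacked on itself, its last row is a zero block followed by the last row of $M_d$, and its first column is the first column of $M_d$ on top of a zero block. With these facts available, each seam case reduces to a direct check; for example, when $j = m$ one uses that the last column of $M_d$ is all ones together with the vanishing of the entries below the top of its first column, and when $i = m$ one uses that the entries of the last row of $M_d$ below the diagonal vanish. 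The base case $d = 0$ is immediate, since $M_0 = (1)$ leaves the range of the recurrence empty and the border conditions hold trivially.

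The main obstacle is purely the case bookkeeping of the seams: one must correctly identify, for every combination of $i,j \in \{{<}m, {=}m, {>}m\}$, which quadrant each entry of the identity falls into, and then verify that the zero bottom-left block together with the two copies of~$M_d$ produce the correct parity. This is routine once the four border facts are isolated, which is exactly why I fold them into the induction hypothesis rather than establishing them as a separate statement afterwards.
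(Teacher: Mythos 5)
Your overall plan---induction on $d$ through the block decomposition, the induction hypothesis disposing of the quadrant interiors, and explicit border facts for the seams---is exactly the strategy of the paper's own proof. But the part you defer as ``routine case bookkeeping'' is a genuine gap, not bookkeeping: the seam checks you assert do not hold, and they cannot, because the identity in the form you are proving is false. Take $d=1$, so that $M_2$ is the displayed $4\times 4$ matrix, and $(i,j)=(1,2)$: then $(M_2)_{1,2}=1$ while $(M_2)_{2,2}\oplus(M_2)_{1,3}=1\oplus 1=0$. In your own case analysis this is the seam $j=m$, $i=1$: the rightward neighbour crosses into the top-right block and equals $(M_d)_{1,1}=1$, not $0$; your phrase ``the vanishing of the entries below the top of its first column'' silently covers only $i\ge 2$ and excludes precisely the failing case. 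The seam $i=m$ fails too: for $j=m-1$ the check reads $(M_d)_{m,m-1}=0\oplus(M_d)_{m,m}$, i.e.\ $0=1$.

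What your border facts actually establish, if you run the induction honestly, is the diagonal Pascal rule $(M_d)_{i,j}=(M_d)_{i+1,j}\oplus(M_d)_{i+1,j+1}$ (each entry is the sum of the entry below and the entry below-right, consistent with $(M_d)_{i,j}=\binom{j-1}{i-1}\bmod 2$). For that version every seam closes: at $j=m$, $i<m$ one needs $(M_d)_{i+1,1}=0$, which holds for \emph{all} admissible $i$ since the row index is $i+1\ge 2$; at $i=m$, $j<m$ one needs $(M_d)_{m,j}=0$, given by the last row $(0,\ldots,0,1)$; at the corner $i=j=m$ one gets $1=0\oplus(M_d)_{1,1}$; and at $i=m$, $j>m$ the first row of all ones gives $1\oplus 1=0$. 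In other words, the checks you sketch are the ones for the below/below-right recurrence, not for the below/right recurrence in the statement: the statement as printed is off by one index (the same slip reappears in the proof of Lemma~\ref{lem:invert-right-sm}, where the two cases $n_j=n_{j+1}$ and $n_j=n_{j+1}+1$ are swapped). So your proposal, as written, asserts verifications that fail; carried out carefully, your own method would have revealed that the claim must be restated in the diagonal form before it can be proved.
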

\begin{proof}
  The proof is carried out by induction on~$d$.  If $d = 0$, the result
  trivially holds.  Suppose that the result holds for $M_d$ and let $i,j$
  be integers such that $1 \le i,j < 2^{d+1}$.  If $i$ and $j$ are
  different from $2^d$, the result follows directly from the induction
  hypothesis and the definition of~$M_{d+1}$.  If either $i = 2^d$ or
  $j = 2^d$, the result follows from the fact that $(M_d)_{i,j}$ is equal
  to $1$ if either $i = 2^d$ or $j = 2^d$ and it is equal to~$0$ if $i = 0$
  or $j = 0$ (and $i$ and $j$ different from $2^d$).  This latter fact is
  easily proved by induction on~$d$.
\end{proof}

\section{Affine necklaces are nested perfect necklaces}

\begin{figure}[htbp]
  \begin{center}
    \begin{tikzpicture}
      \node at (-0.65,0) {$M = $};
      \node at (0,0) {$\left(\vphantom{\rule{0cm}{1cm}} \right.$};
      \node at (2,0) {$\left.\vphantom{\rule{0cm}{1cm}} \right)$};
      \node at (1.55,0.1) {$P$};
      \draw (1.15,-0.3) rectangle (1.95,0.5);
      \draw [<->] (1.15,-0.4) -- (1.95,-0.4);
      \node at (1.55,-0.6) {$k$};
      \draw [<->] (0,-0.4) -- (1.15,-0.4);
      \node at (0.6,-0.6) {$m-k$};
      \draw [<->] (1.05,0.5) -- (1.05,-0.3);
      \node at (0.9,0.1) {$k$};
      \draw [<->] (1.05,0.8) -- (1.05,0.5);
      \node at (0.9,0.65) {$\ell$};
      \draw [<->] (2.15,0.9) -- (2.15,-0.9);
      \node at (2.45,0) {$m$};
      \draw [<->] (0,-0.95) -- (2,-0.95);
      \node at (1,-1.2) {$m$};
    \end{tikzpicture}
  \end{center}
  \caption{Position of the sub-matrix $P$ in $M$
           in Lemma~\ref{lem:invert-right-sm}.}
\label{fig:wallP}
\end{figure}

\begin{lemma} \label{lem:invert-right-sm}
  Let $M$ be %one of the 
  a matrix $M_d^{n_1,\ldots,n_m}$.  Let $\ell$ and~$k$
  be two integers such that $0 \le \ell < \ell + k \le 2^d$.  Any
  sub-matrix obtained by selecting the $k$ rows
  $\ell+1,\ell+2,\ldots,\ell+k$ and the last $k$ columns
  $2^d-k+1,\ldots,2^d$ of~$M$ is invertible.
\end{lemma}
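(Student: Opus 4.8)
The plan is to translate the statement into the language of polynomials over $\FT$ and to prove, by induction on $d$, a clean divisibility-and-vanishing fact from which invertibility follows. First I would record the polynomial meaning of the columns. By Lemma~\ref{lem:pascal} the matrix $M_d$ is the Pascal/Sierpi\'nski triangle, so its $j$-th column is the coefficient vector of $(1+X)^{j-1}$, i.e.\ $(M_d)_{i,j}$ is the coefficient of $X^{i-1}$. The constraints $n_m=0$ and $n_{i+1}\le n_i\le n_{i+1}+1$ give, for the last $k$ columns $m-k+1,\dots,m$ (writing $s_b=n_{m-k+b}$), the bound $s_b\le k-b$; this is exactly what guarantees that rotating the $b$-th of these columns down by $s_b$ produces no wrap-around, so that $\sigma^{s_b}$ acts on it as multiplication by $X^{s_b}$. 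Hence the $b$-th selected column is the coefficient vector of $f_b:=(1+X)^{2^d-k+b-1}X^{s_b}$, a polynomial of degree at most $2^d-1$, and the submatrix $P$ is the matrix whose columns are the coefficients of $X^\ell,\dots,X^{\ell+k-1}$ in $f_1,\dots,f_k$.

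The next, and conceptually decisive, step is to show the shifts $s_b$ are irrelevant. Let $I$ be the $k$-dimensional $\FT$-space of polynomials of degree at most $2^d-1$ divisible by $(1+X)^{2^d-k}$. Each $f_b$ lies in $I$, and since the order of vanishing of $f_b$ at $X=1$ equals $2^d-k+b-1$, these orders are distinct, so $f_1,\dots,f_k$ are linearly independent and form a basis of $I$. If $\Phi$ denotes the linear map reading off the coefficients of $X^\ell,\dots,X^{\ell+k-1}$, then $P$ is the matrix of $\Phi|_I$ in this basis, so $P$ is invertible if and only if $\Phi|_I$ is injective, a property of $I$ and $\ell$ alone that does not depend on the $s_b$. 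It therefore suffices to prove the shift-free statement: for every $\ell$ with $0\le\ell\le 2^d-k$, no nonzero $g\in I$ has all its coefficients of $X^\ell,\dots,X^{\ell+k-1}$ equal to zero.

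I would prove this by induction on $d$, using $(1+X)^{2^{d-1}}=1+X^{2^{d-1}}$ over $\FT$. Writing $m'=2^{d-1}$ and $g=g_0+X^{m'}g_1$ with $\deg g_0,\deg g_1<m'$, each of the regimes $k\le m'$ and $k>m'$ reduces a nonzero $g\in I$ vanishing on the window to a nonzero element of a half-size instance vanishing on a shorter window, contradicting the induction hypothesis. For $k\le m'$ one has $g=(1+X^{m'})g'$ with $g'$ a half-instance element; if the window lies in one half the reduction is immediate, while if it straddles the midpoint the surviving coefficients of $g'$ occupy an interval of length only $m'-k$, too short to carry a nonzero multiple of $(1+X)^{m'-k}$, forcing $g'=0$. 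For $k>m'$, set $k''=k-m'$; reducing modulo $(1+X)^{m'-k''}$ and using that $X^{m'}\equiv 1$ there (because $m'$ is a power of two, so $X^{m'}=1+(1+X)^{m'}$) collapses $g$ to $g_0+g_1$, a half-instance element vanishing on a window of length $k''$, and disjointness of the supports of $g_0$ and $g_1$ handles the degenerate case $g_0+g_1=0$. The base case $d=0$ is immediate.

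The main obstacle is the inductive step, and within it the straddling window, where a priori the zero block splits across the midpoint and one must rule out that a nonzero $g$ hides all its support outside the window. The clean way through is the reduction of the second paragraph: removing the rotations turns everything into a divisibility-by-$(1+X)$ statement, after which a short-support degree count and the identity $X^{m'}\equiv 1\pmod{(1+X)^{m'-k''}}$ dispatch both regimes. I expect that verifying the no-wrap-around claim of the first paragraph and the precise index arithmetic of the straddling case will be the only genuinely fiddly parts.
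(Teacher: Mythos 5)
Your proposal is correct, but it takes a genuinely different route from the paper's proof. The paper works directly on the submatrix $P$: by Lemma~\ref{lem:pascal}, every entry of the rotated matrix satisfies either $M_{i,j} = M_{i+1,j} \oplus M_{i,j+1}$ or $M_{i,j} = M_{i+1,j} \oplus M_{i+1,j+1}$ according to whether $n_j - n_{j+1}$ is $0$ or $1$, and a sequence of row operations (at step $n$, replace $L_i$ by $L_i \oplus L_{i-1}$ for $i > n$) turns $P$ into a matrix with $1$'s on the anti-diagonal and $0$'s below it, so $\det P = 1$. You instead pass to polynomials: column $j$ of $M_d$ is the coefficient vector of $(1+X)^{j-1}$, the bound $n_j \le m-j$ (forced by $n_m = 0$ and the unit-decrement condition) makes each rotation a genuine multiplication by $X^{n_j}$ with no wrap-around, and the last $k$ columns, whatever the shifts, form a basis of the $k$-dimensional space $I$ of polynomials of degree at most $m-1$ divisible by $(1+X)^{m-k}$, since their orders of vanishing at $X=1$ are distinct. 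Invertibility of $P$ is then equivalent to the shift-free statement that no nonzero element of $I$ has $k$ consecutive zero coefficients, which you prove by induction on $d$ using $(1+X)^{2^{d-1}} = 1 + X^{2^{d-1}}$; I verified the three regimes (window inside one half; straddling window, killed by the fact that a nonzero multiple of $(1+X)^{m'-k}$ has support spanning at least $m'-k+1$ positions; and $k > 2^{d-1}$, via the reduction of $g_0 + X^{m'}g_1$ to $g_0+g_1$ modulo $(1+X)^{m'-k''}$, with the disjoint-support argument for the degenerate case $g_0 = g_1$), and all of them close correctly. As for what each approach buys: the paper's elimination is short and entirely elementary, but it is bookkeeping-heavy and tied to the two local recurrences; your argument isolates the conceptual reason the rotations are harmless (they do not change the span of the selected columns), actually proves the stronger fact that the lemma holds for \emph{any} matrix whose last $k$ columns span $I$, and the window-vanishing property of multiples of $(1+X)^{m-k}$ is a clean statement of independent interest.
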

Note that for $k = 2^d$ and $\ell = 0$, the sub-matrix 
in the statement of the  lemma ,
is the whole matrix $M_d^{n_1,\ldots,n_m}$,
 which is invertible.
\begin{proof}
  Let $m = 2^d$ be the number of rows and columns of~$M$.
  By Lemma~\ref{lem:pascal}, each entry~$M_{i,j}$ for $1 \le i,j < m$ of
  the matrix~$M$ satisfies either $M_{i,j} = M_{i+1,j} \oplus M_{i,j+1}$ if
  $n_j = n_{j+1}$ (the column~$C_j$ has been rotated as much as the
  column~$C_{j+1}$) or $M_{i,j} = M_{i+1,j} \oplus M_{i+1,j+1}$ if
  $n_j = n_{j+1} + 1$ (the column~$C_j$ has been rotated once more than the
  column~$C_{j+1}$).

  Let $P$ be the sub-matrix in the statement of the lemma,
  a picture appears as Figure \ref{fig:wallP}. 
  To prove that~$P$ is invertible we apply
  transformations to make it triangular.  Note that all entries of the last
  column are~$1$.  The first transformation applied to~$P$ is as follows.
  The row $L_1$ is left unchanged and the row~$L_j$ for
  $2 \le j \le k$ is replaced by $L_i \oplus L_{i-1}$.  All entries of the
  last column but its top most one become zero.  Furthermore, each entry is
  $P_{i,j}$ is either replaced by either $P_{i,j+1}$ or~$P_{i+1,j+1}$
  depending on the value $n_j - n_{j+1}$.  Note also that the new values of
  the entries still satisfy either $P_{i,j} = P_{i+1,j} \oplus P_{i,j+1}$
  or $P_{i,j} = P_{i+1,j} \oplus P_{i+1,j+1}$ depending on the value
  $n_j - n_{j+1}$. 
 The second transformation applied to~$P$ is as follows.
  The rows $L_1$ and~$L_2$ are left unchanged and each
  row~$L_i$ for $3 \le i \le k$ is replaced by $L_i \oplus L_{i-1}$.  All
  entries of the second to last column but its two topmost ones are now
  zero.  At step~$n$ for $1 \le n < k$, rows $L_1,\ldots,L_n$ are
  left unchanged and each row~$L_i$ for $n+1 \le i \le k$ is replaced
  by $L_i \oplus L_{i-1}$.  After applying all these transformations for
  $1 \le n < k$, each entry~$P_{i,j}$ for $i+j = k+1$ satisfies
  $P_{i,j} = 1$ and each entry~$P_{i,j}$ for $i+j > k+1$ satisfies
  $P_{i,j} = 0$.  It follows that the determinant of~$P$ is~$1$ and that
  the matrix~$P$ is invertible.
\end{proof}

We now introduce the notions of {\em upper} and {\em lower border} of a
matrix~$M_d^{n_1,\ldots,n_m}$.  Let $m = 2^d$ for some $d \ge 0$ and let
$M$ be one matrix~$M_d^{n_1,\ldots,n_m}$.  An entry~$M_{i,j}$ for
$1 \le i,j \le m$ is said to be in the \emph{upper border} (respectively
\emph{lower border}) of~$M$ 
%and 
if $M_{i,j} = 1$ and $M_{k,j} = 0$ for all $k=1, .. i-1$
%,  : hence,  all entries above but in the same column $j$ have value~$0$
 (respectively below). 
For instance, the upper border of the matrix~$M_d$ 
is the first row and its lower border is the main diagonal. 
The following pictures in boldface the upper and lower borders of the matrix 
$M_3^{3,3,2,1,1,1,0,0}$:

\begin{displaymath}
  M_3^{3,3,2,1,1,1,0,0} = 
  \begin{pmatrix}
     0 & 0 & 0 & 0 & 0 & 0 & \mathbf{1} & \mathbf{1} \\
     0 & 0 & 0 & \mathbf{1} & \mathbf{1} & \mathbf{1} & 0 & 1 \\
     0 & 0 & \mathbf{1} & 1 & 0 & 1 & 1 & 1 \\
     \mathbf{1} & \mathbf{1} & 0 & 1 & 0 & 0 & 0 & 1 \\
     0 & \mathbf{1} & \mathbf{1} & \mathbf{1} & 0 & 0 & 1 & 1 \\
     0 & 0 & 0 & 0 & \mathbf{1} & 1 & 0 & 1 \\
     0 & 0 & 0 & 0 & 0 & \mathbf{1} & \mathbf{1} & 1 \\
     0 & 0 & 0 & 0 & 0 & 0 & 0 & \mathbf{1} 
  \end{pmatrix}
\end{displaymath}
\bigskip

 We gather now some easy facts about the upper and lower borders of a
matrix~$M_d^{n_1,\ldots,n_m}$. 
Both borders start in the unique entry~$1$ of the first column. 
The upper border ends in the top most entry of the last column
and the lower border ends in the bottom most entry of the last column.  
The upper border only uses either East or
North-East steps and the lower border only uses either East or South-East
steps.  The upper border uses a East step from column~$C_j$ to
column~$C_{j+1}$ if $n_j-n_{j+1}= 0$ and uses a North-East step if
$n_j-n_{j+1}= 1$.  Furthermore, whenever the upper border uses an East
(respectively North-East) step to go from one columns to its right
neighbour, the lower border uses a South-East (respectively East) step.
This is due to the fact that the distance from the upper border to the
lower border in the $i$-th column is $i-1$.

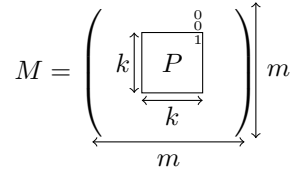
\begin{figure}[htbp]
  \begin{center}
    \begin{tikzpicture}
      \node at (-0.65,0) {$M = $};
      \node at (0,0) {$\left(\vphantom{\rule{0cm}{1cm}} \right.$};
      \node at (2,0) {$\left.\vphantom{\rule{0cm}{1cm}} \right)$};
      \node at (1.05,0.1) {$P$};
      \draw (0.65,-0.3) rectangle (1.45,0.5);
      \draw [<->] (0.65,-0.4) -- (1.45,-0.4);
      \node at (1.05,-0.6) {$k$};
      \draw [<->] (0.55,0.5) -- (0.55,-0.3);
      \node at (0.4,0.1) {$k$};
      \node at (1.39,0.4) {$\scriptscriptstyle 1$};
      \node at (1.39,0.59) {$\scriptscriptstyle 0$};
      \node at (1.39,0.74) {$\scriptscriptstyle 0$};
      \draw [<->] (2.15,0.9) -- (2.15,-0.9);
      \node at (2.45,0) {$m$};
      \draw [<->] (0,-0.95) -- (2,-0.95);
      \node at (1,-1.2) {$m$};
    \end{tikzpicture}
  \end{center}
  \caption{Position of the sub-matrix $P$ in $M$
           in Lemma~\ref{lem:invert-top-sm}}
\label{fig:floatP}
\end{figure}

Due to the symmetry in the matrix $M_d$
Lemma~\ref{lem:invert-right-sm} 
 applies also  to the  sub-matrices of $M_d$
obtained by selecting the first row.  
%Notice that matrix~$M_d$ has a symmetry and therefore
%Lemma~\ref{lem:invert-right-sm} 
%would also apply to its sub-matrices
%obtained by selecting the first row.  
Since this symmetry is lost for matrices $M_d^{n_1,\ldots,n_m}$.
% and therefore, 
we need the following lemma which
accounts for the rotations made to the columns in~$M_d$ to obtain
$M_d^{n_1,\ldots,n_m}$.

\begin{lemma} \label{lem:invert-top-sm}
  Let $M$ be one of the matrix $M_d^{n_1,\ldots,n_m}$.  Let $k$ be an
  integer such that $1 \le k \le 2^d$.  The $k\times k$-sub-matrix obtained
  by selecting $k$ consecutive rows and $k$ consecutive columns in such
  a way that such that its top right entry lies in the upper border of~$M$ is
  invertible.
\end{lemma}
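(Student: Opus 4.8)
The plan is to reduce Lemma~\ref{lem:invert-top-sm} to Lemma~\ref{lem:invert-right-sm} by exploiting the structure of the borders just described. The key observation is that Lemma~\ref{lem:invert-right-sm} already handles sub-matrices anchored at the \emph{last} columns, where the lower border terminates at the bottom-right corner; here we must instead handle sub-matrices whose top-right entry sits on the \emph{upper} border, at an arbitrary position. So the first thing I would do is set up coordinates: suppose the $k\times k$ sub-matrix $P$ occupies rows $i+1,\ldots,i+k$ and columns $j+1,\ldots,j+k$, with the constraint that the entry $M_{i+1,j+k}$ lies on the upper border of~$M$. I would then argue directly that $P$ can be triangulated by the same kind of row operations used in the proof of Lemma~\ref{lem:invert-right-sm}, since the defining recurrence $M_{i,j} = M_{i+1,j} \oplus M_{i,j+1}$ or $M_{i,j} = M_{i+1,j} \oplus M_{i+1,j+1}$ (depending on $n_j - n_{j+1}$) holds uniformly across the whole matrix and is exactly what drives that triangulation.

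The heart of the matter is to identify the correct anti-diagonal of ones. In Lemma~\ref{lem:invert-right-sm}, invertibility came from the last column being all ones, which after the row operations produced a unit anti-diagonal. Here the analogous ``seed'' is the upper border itself: because the top-right entry of~$P$ lies on the upper border, all entries of~$M$ \emph{strictly above} the upper border in that column are zero, so the top-right entry is the topmost one in its column of~$P$. The plan is to run the row-elimination in the mirror-image direction to that of Lemma~\ref{lem:invert-right-sm}: process the columns of~$P$ from right to left, at each stage using the recurrence to zero out the entries above the current border position. Following the border along its East and North-East steps guarantees that the pivots stay inside~$P$ and form a unit anti-diagonal, so that the determinant is again~$1$.

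The main obstacle I expect is bookkeeping at the boundary steps, precisely where the upper border switches between an East and a North-East step. When $n_j = n_{j+1}$ the recurrence combines horizontally adjacent entries, and when $n_j = n_{j+1}+1$ it combines entries offset by one row; in the latter case the row operation shifts the active pivot up by one, which is exactly the North-East step of the border. I would verify that the selected block of $k$ consecutive rows is always wide enough to contain the border as it wanders: the constraint $1 \le k \le 2^d$ together with the geometry of the border (which only moves East or North-East, never South) ensures the border cannot exit the bottom of~$P$, and the hypothesis that the top-right entry lies on the border ensures it cannot exit the top. Granting this containment, the triangulation proceeds exactly as before and yields $\det P = 1$, completing the proof.
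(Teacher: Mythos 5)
Your high-level target---reducing $P$ by elementary operations to a matrix with unit anti-diagonal and zeros on one side, in analogy with Lemma~\ref{lem:invert-right-sm}---is the same as the paper's, and your setup and your containment remark (the border enters $P$ at its top-right corner and, moving leftwards, descends at most one row per column, hence cannot leave $P$) are correct. But the elimination you describe does not do the work that is actually needed. The entries ``above the current border position'' are zero \emph{by definition} of the upper border (a border entry is the topmost $1$ of its column), so the operation you propose is vacuous. The real difficulty is the opposite one: the border is a staircase using both East and North-East steps, so it lies weakly \emph{above} the anti-diagonal of $P$ and coincides with it only when every step is North-East. Whenever the border has a horizontal (East-step) run, several consecutive columns have their border $1$ in the \emph{same} row; those positions cannot all serve as pivots, so your claim that the pivots ``follow the border'' and simultaneously ``form a unit anti-diagonal'' is self-contradictory in exactly those cases. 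What has to be proved is that the horizontal runs of border $1$s can be pushed down, one row per stage, onto the anti-diagonal, while controlling the unconstrained entries that sit \emph{below} the border; nothing in your proposal performs this step.

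The paper does this with \emph{column} operations, not row operations: each stage replaces $C_j$ by $C_j \oplus C_{j+1}$ on a suitable range of columns, which by the Pascal recurrence amounts to a cyclic shift of those columns; the stages are indexed by the North-East-step columns $j_t, j_{t-1}, \ldots, j_1$ of the border followed by $k-t-1$ further stages, and each stage zeroes one more top row of $P$ except its rightmost entries while preserving the recurrence, so that by induction the final matrix is anti-triangular with unit anti-diagonal. Your plan to reuse ``the same kind of row operations'' as in Lemma~\ref{lem:invert-right-sm} in mirror image cannot be carried out as stated: the seed of that argument was the all-ones last column $C_m$ of $M$, which the present sub-matrix $P$ (built from $k$ \emph{arbitrary} consecutive columns of $M$) need not contain, and the paper's remark that the symmetry of $M_d$ is destroyed by the rotations in $M_d^{n_1,\ldots,n_m}$ is precisely the statement that no mirror-image reduction to Lemma~\ref{lem:invert-right-sm} is available. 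To repair your proof you would have to replace the vacuous ``clear above the border'' step by the column-shifting stages just described, at which point you would have reconstructed the paper's argument rather than a variant of it.
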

\begin{proof}
  The proof is similar to that of Lemma~\ref{lem:invert-right-sm}.  
   Let $P$ be the sub-matrix in the statement of the lemma, a picture appears in Figure \ref{fig:floatP}.  
   We apply transformations to the sub-matrix~$P$ to put it in a nice form  such that 
   the determinant is easy to compute.  Just to fix notation, we
  suppose that the sub-matrix~$P$ is obtained by selecting rows
  $L_{r+1},\ldots,L_{r+k}$ and columns $C_{s+1},\ldots,C_{s+k}$.  The
  hypothesis is that the entry $M_{r+k,s+k}$ is in the upper border
  of~$M$.  Note that the upper borders of $M$ and~$P$ coincide inside~$P$.
  We denote by $j_1,\ldots, j_t$ the indices of the columns of~$P$ in
  $1,\ldots,k$ which are reached by a North-East step of the upper
  border.  This means that $j_1,\ldots,j_t$ is the sequences of indices~$j$
  such that $n_{s+j-1}-n_{s+j} = 1$.  By convention, we set $j_0 = 1$, that
  is, the index of the first column of~$P$.

  The first transformation applied to the matrix~$P$ is the following.  The
  columns $C_1,\ldots,C_{j_t-1}$ and~$C_k$ are left unchanged and each
  column~$C_j$ for $j_t \le j \le k-1$ is replaced by $C_j \oplus C_{j+1}$.
  All entries of the first row but its right most one become~zero.

  Furthermore, each entry $P_{i,j}$ for $j_t \le j \le k-1$ is replaced by
  $P_{i+1,j}$.  The second transformation applied to the matrix~$P$ is the
  following.  The columns $C_1,\ldots,C_{j_{t-1}-1}$ and~$C_{k-1},C_k$ are
  left unchanged and each column~$C_j$ for $j_{t-1} \le j \le k-2$ is
  replaced by $C_j \oplus C_{j+1}$.  The first row remains unchanged and
  all entries of the second row but the last two become $0$.  We apply $t$
  transformations like this using successively $j_t,j_{t-1},\ldots,j_1$.
  Then $k-t-1$ further steps are made using then $k_0 = 1$ each time.
  After applying all these transformations, each entry~$P_{i,j}$ for
  $i+j = \ell+1$ satisfies $P_{i,j} = 1$ and each entry~$P_{i,j}$ for
  $i+j < \ell+1$ satisfies $P_{i,j} = 0$.  It follows that the determinant
  of~$P$ is~$1$ and that the matrix~$P$ is invertible.
\end{proof}

For a word $w$ we write $w^n$ to denote the word given by concatenation of
$n$ copies of~$w$.  The following lemma states that each $(k,m)$-nested
perfect necklace can be transformed into another $(k,m)$-nested perfect
necklace which starts with $0^m$.

\begin{lemma} \label{lem:xor-npn}
  Let $w$ be a word of length $m2^k$ and let $z$ be a word of length~$m$.
  The word~$w$ is a $(k,m)$-nested perfect necklace if and only if the word
  $w \oplus z^{2^k}$ is a $(k,m)$-nested perfect necklace.
\end{lemma}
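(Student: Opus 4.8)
The plan is to exploit the block characterization of nested perfect necklaces together with the observation that $w \mapsto w \oplus z^{2^k}$ is an involution, so that only one implication needs a proof. Writing $w' = w \oplus z^{2^k}$, I would first check that the two words decompose into aligned blocks that differ by a periodic mask. Fix $\ell$ with $1 \le \ell \le k$ and a position $p \equiv 1 \pmod{m2^\ell}$, and let $B$ and $B'$ be the blocks of length $m2^\ell$ of $w$ and $w'$ starting at $p$. Since $m \mid m2^\ell$ we have $p \equiv 1 \pmod m$, and since $z^{2^k}$ has period $m$, the factor of $z^{2^k}$ aligned with this block is exactly $z^{2^\ell}$; hence $B' = B \oplus z^{2^\ell}$. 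By the block definition of nested perfect necklaces it then suffices to prove the following claim: if $B$ is an $(\ell,m)$-perfect necklace of length $m2^\ell$, then so is $B \oplus z^{2^\ell}$.

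For this claim I would regard $B$ as a circular word of length $L = m2^\ell$ and compare, for each starting position $i$, the length-$\ell$ factors of $B$ and of $B' = B \oplus z^{2^\ell}$. Writing $\phi(i) = ((i-1)\bmod m)+1$, the factor of $B'$ starting at position $i$ equals the factor of $B$ at $i$ XORed with the mask $\mu(i) = z_{\phi(i)} z_{\phi(i+1)} \cdots z_{\phi(i+\ell-1)}$. Two features of this mask drive the argument: because $L$ is a multiple of $m$, the function $\phi$ is compatible with reading indices modulo $L$, so $\mu$ is well defined on the circular word; and because $\phi$ has period $m$, the value $\mu(i)$ depends only on $i$ modulo $m$. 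Denote by $\mu_r$ this common value on the residue class~$r$.

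Then, for a fixed residue $r$ modulo $m$, the $(\ell,m)$-perfectness of $B$ gives, for each length-$\ell$ word $v$, a unique position $i \equiv r \pmod m$ at which $v$ occurs in $B$; at that position the factor of $B'$ is $v \oplus \mu_r$. Since $v \mapsto v \oplus \mu_r$ is a bijection on length-$\ell$ words, each length-$\ell$ word occurs in $B'$ exactly once at a position $\equiv r \pmod m$, and letting $r$ run over all $m$ residues yields that $B'$ is $(\ell,m)$-perfect. This proves the claim. Applying the claim to every block for every $\ell$ shows that $w'$ is $(k,m)$-nested perfect whenever $w$ is, and the converse is immediate from the involution $w' \oplus z^{2^k} = w$.

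I expect the only real obstacle to be bookkeeping rather than ideas: one must verify that the periodic mask is consistent with the circular wraparound inside each block, which is precisely where the divisibility $m \mid m2^\ell$ (equivalently $L \equiv 0 \pmod m$) is used. Without it, the factors that wrap past the end of a block would be XORed with a shifted, incompatible copy of $z$, and the residue-by-residue counting that yields the perfect necklace property would break down.
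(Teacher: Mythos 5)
Your proposal is correct and follows essentially the same route as the paper's own proof: align the blocks of $w$ and $w \oplus z^{2^k}$ (using that a block starting at a position $\equiv 1 \pmod{m2^\ell}$ differs from its counterpart exactly by the periodic mask $z^{2^\ell}$), observe that the mask seen by a length-$\ell$ factor depends only on its position modulo $m$, and transfer occurrences via the XOR bijection on length-$\ell$ words, residue class by residue class. Your write-up is in fact slightly more careful than the paper's (explicit involution for the converse, explicit uniqueness via the bijection rather than an existence-plus-counting step), but the underlying argument is the same.
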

\begin{proof}
  Note first that both words $w$ and~$z^{2^k}$ have a length of~$m2^k$.
  Let $w'$ be the word $w \oplus z^{2^k}$.  Let $\ell$ be an integer such
  that $1 \le \ell \le k$ and let $v'$ be a block of~$w'$ of
  length~$m2^{\ell}$ starting at a position~$j$ congruent to~$1$ modulo
  $m2^{\ell}$.  The corresponding block of~$w$ at the same position~$j$ is
  of course $v = v' \oplus z^{2^\ell}$.  By hypothesis, this later
  block~$v$ is a $(\ell,m)$-nested perfect necklace.  We claim that $v'$ is
  also a $(\ell,m)$-nested perfect necklace.

  Let $i$ be such that $1 \le i \le m$ and let $u'$ any word of
  length~$\ell$.  Let $t$ be the block of~$zz$ of length~$\ell$ starting at
  position~$i$ and consider the word $u = u' \oplus t$.  This word~$u$ has
  an occurrence in the necklace~$v$ at a position~$j'$ congruent to~$i$
  modulo~$m$.  It follows that $u' = u \oplus t$ has an occurrence at the
  same position~$j'$ in~$v'$.  Since each word~$u$ has such an occurrence
  for each possible~$i$ and $v'$ has length $m2^{\ell}$, $v'$ is a
  $(\ell,m)$-nested perfect necklace.
\end{proof}

We can now prove that the  all the $(k,m)$-affine necklaces are  $(k,m)$-nested  
perfect necklaces.

\begin{proposition} \label{pro:aff2nes}
  Let $k,m,d$ be integers such that $d \ge 0$, $m = 2^d$ and
  $1 \le k \le m$.  Each $(k,m)$-affine necklace is a $(k,m)$-nested
  perfect necklace.
\end{proposition}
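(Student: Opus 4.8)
The plan is to first strip away the inessential data and reduce the proposition to a single perfectness statement, and then to attack that statement with the two invertibility lemmas. Since $Mw'_i = Mw_i \oplus Mz$, the affine necklace equals $W_0 \oplus (Mz)^{2^k}$, where $W_0$ is the affine necklace built from the same matrix with $z = 0$; by Lemma~\ref{lem:xor-npn} it is a $(k,m)$-nested perfect necklace if and only if $W_0$ is, so I may assume $z = 0$. I would then induct on $k$ via the recursive definition. Writing the first $2^k$ words of length $m$ in lexicographic order as $w_i = 0^{m-k}\mathrm{bin}_k(i-1)$, the first half $(Mw_1)\cdots(Mw_{2^{k-1}})$ is literally the $(k-1,m)$-affine necklace of the same matrix, while in the second half the leading counter bit is $1$, so that half is the $(k-1,m)$-affine necklace XORed with a constant block $(Mz_0)^{2^{k-1}}$ for the fixed $z_0 = 0^{m-k}10^{k-1}$. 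By the induction hypothesis and Lemma~\ref{lem:xor-npn}, both halves are $(k-1,m)$-nested perfect, so by the recursive definition it only remains to prove that $W_0$ is itself a $(k,m)$-\emph{perfect} necklace (this also covers the base case $k=1$).

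For the perfectness I would encode the necklace linearly. Let $\tilde M$ be the $m\times k$ matrix of the last $k$ columns of $M$; then the symbol at row $j$ of block $a$ is $(\tilde M\,\mathrm{bin}_k(a-1))_j$. Indexing positions modulo $m$ by the row $j$, the word $W_0$ is $(k,m)$-perfect if and only if, for every residue $j\in\{1,\dots,m\}$, the map sending a block $a$ to the length-$k$ factor starting at row $j$ of block $a$ is a bijection from $\{1,\dots,2^k\}$ onto $\FT^k$; this is exactly the requirement that each word occur $m$ times, once per residue. I would then split according to whether the factor stays inside one block. For an \emph{interior} factor, i.e.\ $j\le m-k+1$, the factor equals $Q_j\,\mathrm{bin}_k(a-1)$ where $Q_j$ is the $k\times k$ submatrix of $M$ on rows $j,\dots,j+k-1$ and the last $k$ columns; Lemma~\ref{lem:invert-right-sm} (with its parameters $\ell=j-1$ and $k$) makes $Q_j$ invertible, so the map is a linear bijection and this residue is settled.

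The main obstacle is the \emph{spanning} factors, $j>m-k+1$, which read the bottom $\ell_1=m-j+1$ rows of block $a$ and the top $\ell_2=k-\ell_1$ rows of block $a+1$ (cyclically, when $a=2^k$). Here the two blocks carry the \emph{consecutive} counter values $\mathrm{bin}_k(a-1)$ and $\mathrm{bin}_k(a)$; writing $\mathrm{bin}_k(a)=\mathrm{bin}_k(a-1)\oplus v$, where $v$ is the suffix of ones produced by the binary carry, the factor becomes $S\,\mathrm{bin}_k(a-1)\oplus\binom{0}{Tv}$. Here $S$ gathers the bottom rows $j,\dots,m$ together with the top rows $1,\dots,\ell_2$ of $\tilde M$—a \emph{cyclically contiguous} block of rows—and $T$ is its top part, whose top-right corner lies on the upper border of $M$. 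The plan is to derive invertibility of this wrapped square submatrix from Lemma~\ref{lem:invert-top-sm}, which is tailored precisely to upper-border-anchored square submatrices, combined with the bottom-row invertibility coming from Lemma~\ref{lem:invert-right-sm}.

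The delicate point is that, even with $S$ invertible, the factor is not an $\FT$-linear function of the counter: the correction term $\binom{0}{Tv}$ is nonlinear, since $v$ depends on the carry pattern of $\mathrm{bin}_k(a-1)$. The final step of the plan is therefore to show that this carry correction cannot destroy injectivity of $a\mapsto S\,\mathrm{bin}_k(a-1)\oplus\binom{0}{Tv}$, by partitioning the blocks according to the number of trailing ones of the counter and tracking how each carry suffix $v$ is mapped by $T$. I expect this interplay between the invertible linear part and the carry structure of the binary counter to be the crux of the argument; by contrast, the interior case and the reductions of the first paragraph are comparatively routine.
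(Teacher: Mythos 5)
Your reductions are sound and partly match the paper's: Lemma~\ref{lem:xor-npn} strips away $z$ exactly as the paper does, the interior case (your $j\le m-k+1$, the paper's $\ell+k\le m$) is handled identically via Lemma~\ref{lem:invert-right-sm}, and your induction on $k$ through the recursive definition is a legitimate alternative to the paper's reduction (the paper instead notes it suffices to treat $k=m$ and then verifies perfectness of every aligned block $(Mw_{p2^k+1})\cdots(Mw_{(p+1)2^k})$; your route has the minor advantage that only the $p=0$ block, with no fixed high-order bits $x_p$, ever needs to be analyzed).

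However, the spanning case is a genuine gap, and it is the heart of the proof. You recast the wrapped factor as $S\,\mathrm{bin}_k(a-1)\oplus\binom{0}{Tv}$, an invertible-looking linear part plus a nonlinear carry correction, and then only announce a plan --- partition the blocks by the number of trailing ones and track how each carry suffix $v$ is mapped by $T$ --- without executing it. Nothing in the proposal shows that this perturbation preserves injectivity, and that is exactly where all the difficulty sits. Moreover, Lemma~\ref{lem:invert-top-sm} does not apply the way you invoke it: it concerns a \emph{square} submatrix on consecutive rows and consecutive columns whose top-right entry lies on the upper border, whereas your $T$ is $\ell_2\times k$ and your $S$ has cyclically wrapped rows; the paper never claims (or needs) invertibility of such a wrapped matrix. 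The paper's resolution uses a mechanism your setup obscures: keep \emph{both} counters $y_{q-1}$ and $y_q$ as unknowns, use the fact that consecutive integers determine each other's low-order bits from right to left, and exploit the border structure of $M$ --- between any two adjacent columns, exactly one of the upper or lower borders takes a non-horizontal step, so at each stage one of the two linear systems contributes a row of the form $(0,\ldots,0,1,*,\ldots,*)$ that pins down the next bit of $y_{q-1}$ or $y_q$. The rows of the first equation are consumed top-down, those of the second bottom-up, and once one system is exhausted the residual square system is precisely one covered by Lemma~\ref{lem:invert-top-sm}, giving uniqueness of the remaining bits. Without this argument (or a completed version of your trailing-ones analysis), your proof of the proposition is incomplete.
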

The proof of Proposition \ref{pro:aff2nes} follows and extends that
of~\cite[Lemma~5]{Levin1999} but we use a different notation.
\begin{proof}
  It suffices of course to prove the result for $k = m$.  By
  Lemma~\ref{lem:xor-npn}, it may be assumed that the vector~$z$ in the
  definition of affine necklaces is the zero vector.  Let $M$ be one of the
  matrices $M_d^{n_1,\ldots,n_m}$, let $w_1,\ldots,w_{2^m}$ be the
  enumeration in lexicographic order of all words of length~$m$ over~$\FT$
  and suppose that the $(m,m)$-affine necklace~$w$ is the concatenation
  $(Mw_1)(Mw_2) \cdots (Mw_{2^m})$.  Let $k$ be an integer such that
  $1 \le k \le m$ and let $w'$ be a block of~$w$ of length~$m2^k$ starting
  at a position congruent to~$1$ modulo $m2^k$.  The word~$w'$ is thus
  equal to a concatenation of the form
  $(Mw_{p2^k+1}) \cdots (Mw_{(p+1)2^k})$ for some fixed integer~$p$ such
  that $0 \le p \le 2^{m-k}-1$.  We claim that $w'$ is a $(k,m)$-perfect
  necklace.

  To prove the claim, it must be shown that for each
  integer~$\ell$ such that $0 \le \ell < m$, each word~$u$ of length~$k$
  has exactly one occurrence in~$w$ with a starting position congruent
  to~$\ell+1$ modulo~$m$ (we write $\ell+1$ rather than $\ell$ because
  positions are numbered from~$1$).  We now suppose that the word $u$ and
  the integer~$\ell$ such that $0 \le \ell < m$ are fixed.  We distinguish
  two cases depending on whether $\ell+k \le m$ or~not.

  We first suppose that $k + \ell \le m$.  It follows that the wanted
  occurrence of~$u$ must be fully contained in a single word~$Mw_{p2^k+q}$
  for $1 \le q \le 2^k$.  More precisely it must lie in the positions
  $\ell+1, \ldots, \ell+k$ of $Mw_{p2^k+q}$. In that case, the claim boils
  down to showing that there is exactly one integer~$q$ such that $u$
  occurs in positions $\ell+1, \ldots, \ell+k$ of $Mw_{p2^k+q}$.  Let us
  recall that $p$ is fixed and that $q$ ranges in $1,\ldots, 2^k$.  Since
  $w_i$ is the base~$2$ expansion of~$i-1$ with $m$ bits, $w_{p2^k+q}$ can
  factorized $x_py_{q-1}$ where $x_p$ and $y_{q-1}$ are the base~$2$
  expansions of $p$ and~$q-1$ with $m-k$ and $k$ bits.
  
  \begin{center}
    \begin{tikzpicture}
      \node at (-0.65,0) {$M = $};
      \node at (0,0) {$\left(\vphantom{\rule{0cm}{1cm}} \right.$};
      \node at (2,0) {$\left.\vphantom{\rule{0cm}{1cm}} \right)$};
      \node at (1.55,0.1) {$P$};
      \draw (1.15,-0.3) rectangle (1.95,0.5);
      \node at (0.6,0.1) {$N$};
      \draw (0.05,-0.3) rectangle (1.1,0.5);
    \end{tikzpicture}
  \end{center}

  The occurrence of~$u$ in~$w_{p2^k+q}$ is now translated into linear
  equations by introducing the following two matrices $N$ and~$P$ (see
  above).  Let $N$ and $P$ be the following sub-matrices of the matrix~$M$.
  The $k \times m-k$ matrix~$N$ is obtained by selecting the $k$ rows
  $L_{\ell+1},\ldots, L_{\ell+k}$ and the $m-k$ columns
  $C_1,\ldots,C_{m-k}$.  The $k \times k$ matrix~$P$ is obtained by
  selecting the same $k$ rows $L_{\ell+1},\ldots, L_{\ell+k}$ and the $k$
  columns $C_{m-k+1},\ldots,C_{m}$.  The word~$u$ occurs in the positions
  $\ell+1, \ldots, \ell+k$ of $Mw_{p2^k+q}$ if and only if
  $u = Nx_p + Py_{q-1}$ where words $u$, $x_p$ and~$y_{q-1}$ are considered
  as columns vectors of respective dimensions $k$, $m-k$ and~$k$.  Since
  $x_p$ is fixed and $P$ is invertible by Lemma~\ref{lem:invert-right-sm},
  there is exactly one solution for~$y_{q-1}$ and thus one solution
  for~$q$.  This proves the claim when $k + \ell \le m$.

  We now suppose that $\ell+k>m$.  The wanted occurrence of~$u$ must then
  overlap two consecutive words $Mw_{p2^k+q}$ and $Mw_{p2^k+q+1}$ where
  $p2^k+q+1$ should be understood as $p2^k+1$ if $q = 2^k$.  Let us write
  $u = u_1u_2$ where $u_1$ and $u_2$ have length $m-\ell$ and $\ell+k-m$.
  The wanted occurrences exist if $u_1$ occurs at positions
  $\ell+1,\ldots,m$ of $Mw_{p2^k+q}$ and $u_2$ occurs at positions
  $1,\ldots, \ell+k-m$ of $Mw_{p2^k+q+1}$ with the same convention for
  $p2^k+q+1$.  As in the previous case, these occurrences are translated
  into linear equations.  For that purpose, we introduce the following four
  matrices.

  \begin{center}
    \begin{tikzpicture}
      \node at (-0.65,0) {$M = $};
      \node at (0,0) {$\left(\vphantom{\rule{0cm}{1cm}} \right.$};
      \node at (2,0) {$\left.\vphantom{\rule{0cm}{1cm}} \right)$};
      \node at (1.55,0.40) {$P_2$};
      \draw (1.15,0.2) rectangle (1.95,0.67);
      \node at (0.6,0.40) {$N_2$};
      \draw (0.05,0.2) rectangle (1.1,0.67);
      \node at (1.55,-0.52) {$P_1$};
      \draw (1.15,-0.3) rectangle (1.95,-0.7);
      \node at (0.6,-0.52) {$N_1$};
      \draw (0.05,-0.3) rectangle (1.1,-0.7);
    \end{tikzpicture}
  \end{center}

  The matrices $N_1$ and $P_1$ are obtained by selecting the rows
  $L_{\ell+1},\ldots,L_m$ and the columns $C_1,\ldots,C_{m-k}$ for $N_1$
  and $C_{m-k+1},\ldots,C_m$ for $P_1$.  The matrices $N_2$ and $P_2$ are
  obtained by selecting the rows $L_1,\ldots,L_{\ell+k-m}$ and the columns
  $C_1,\ldots,C_{m-k}$ for $N_2$ and $C_{m-k+1},\ldots,C_m$ for $P_2$ (see
  above).  The two words $w_{p2^k+q}$ and $w_{p2^k+q+1}$ are then
  factorized $w_{p2^k+q} = x_py_{q-1}$ and $w_{p2^k+q+1} = x_py_q$ where
  $x_p$ is the base~$2$ expansion of~$p$ with $2^{m-k}$ bits and words
  $y_{q-1}$ and $y_q$ are the base~$2$ expansions of $q-1$ and~$q$
  (understood as~$0$ if $q = 2^k$) with $2^k$ bits.  The occurrences of
  $u_1$ and $u_2$ do exist as wanted if and only if these two equalities hold,
  
\begin{align*}
   u_1 & =  N_1x_p + P_1y_{q-1},  %$ and $
  \\
   u_2 &= N_2x_p + P_2y_q  % $ do hold.
  \end{align*}

% where  words are considered as columns vectors.  
  Notice that the first equation
  involves $y_{q-1}$ while the second one involves~$y_q$.  These two words
  are strongly related in the sense that each one determines the~other.
  %  There is even a stronger relation. 
   For each $i$ such that  $0 \le i \le k$, the $i$
  right most bits of either $y_{q-1}$ or $y_q$ determine the $i$ right most
  bits of the other.  This is due to the fact that either adding or
  subtracting~$1$ can be performed on the bits from right to left.  For
  that reason, we will show that the equations $u_1 = N_1x_p + P_1y_{q-1}$
  and $u_2 = N_2x_p + P_2y_q$ have a unique solution in~$q$ by successively
  computing the bits of $q-1$ and~$q$ from right to~left.

  We actually describe a strategy for solving the two equations.  This
  strategy is based of the upper and lower borders of the matrix~$M$.  The
  main ingredient is that between two consecutive columns $C_j$
  and~$C_{j+1}$, one of the two borders uses a step which is not
  horizontal, that is, either North-East for the upper border or South-East
  for the lower border.

  The right most bit of $y_{q-1}$ and~$y_q$ can be found as follows.
  Either the upper border or the lower border makes a non horizontal step
  from~$C_{m-1}$ to~$C_m$.  It means that either the first row or the last
  row of~$M$ has the form $(0,\ldots,0,1)$.  This row can be used to find
  the right most bit of $y_{q-1}$ and~$y_q$ as it is the first row the
  equation $u_1 = N_1x_p + P_1y_{q-1}$ or the last row of the equation
  $u_2 = N_2x_p + P_2y_q$.  The second right most bit of $y_{q-1}$
  and~$y_q$ can be found as follows.  Either the upper border or the lower
  border makes a non horizontal step from~$C_{m-2}$ to~$C_{m-1}$.  It means
  that one row of~$M$ has the form $(0,\ldots,0,1,*)$.  It can be used to
  find the second right most bit of $y_{q-1}$ and~$y_q$ as it is one row
  of one of the two equations. 

 This process can be continued using at each
  step a row of either the first or the second equation.  In the process
  rows of the first equation are used from the first to the last while
  rows of the second equation are used from the last to the first.  This
  process can be continued until the rows of one the equations have been
  exhausted.  By symmetry, it can be assumed that all rows of the second
  equation have been used.  Suppose that the left most $n$ bits of $y_{q-1}$
  and~$y_q$ have still to be found.  Then the last $n$ rows of the first
  equations have not been used.  Considering the known bits as constants,
  the matrix involving the $r$ unknown bits is a matrix as in
  Lemma~\ref{lem:invert-top-sm}.  By this lemma, this matrix is invertible
  and these last $r$ bits can be found in a unique way.  This proves the
  claim when $\ell + k > m$ and finishes the proof of the proposition.
\end{proof}

\section{Nested perfect necklaces are affine necklaces}

We shall now show that all $(k,m)$-nested perfect necklaces
are $(k,m)$-affine necklaces.  Since the other inclusion has been already
proved, it suffices to show that they have the same cardinality.  
 The next lemma shows that for $k = 1$, they coincide.

\begin{lemma} \label{lem:1m-necklaces}
  The $(1,m)$-nested perfect necklaces are the words of the form $ww'$
  where $w$ and~$w'$ are two words of length~$m$ satisfying
  $w' = w \oplus 1^m$.  Furthermore, they are all affine.
\end{lemma}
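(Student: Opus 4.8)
The plan is to reduce the nested-perfect condition for $k=1$ to a simple pairing condition between the two halves of the word, and then to recognise every word satisfying it as an affine necklace.

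First I would note that for $k=1$ the only instance of the defining condition of a nested perfect necklace is $\ell=1$, so a word $s$ of length $2m$ is a $(1,m)$-nested perfect necklace if and only if it is a $(1,m)$-perfect necklace. I would then unwind the perfect condition. Among the $2m$ positions of $s$, group them into the $m$ residue classes modulo~$m$, each class consisting of exactly two positions $\{r, r+m\}$. By definition each of the two symbols $0$ and~$1$ occurs exactly $m$ times at positions pairwise distinct modulo~$m$; since there are exactly $m$ residue classes, each symbol occurs exactly once in each class. Hence the two positions of every class carry opposite symbols. Writing $s = ww'$ with $|w|=|w'|=m$, position $r+m$ then holds the opposite symbol to position~$r$, which is precisely $w' = w \oplus 1^m$. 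Conversely, any word $s = ww'$ with $w' = w \oplus 1^m$ has opposite symbols in each residue class, so each symbol occurs once per class, that is $m$ times at distinct residues; as this pairing is invariant under rotation, $s$ is a $(1,m)$-perfect necklace for any convention on the starting point. This establishes the first assertion.

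For the second assertion I would recall that a $(1,m)$-affine necklace is the concatenation $(Mw'_1)(Mw'_2)$, where $w'_1 = z$ and $w'_2 = z \oplus 0^{m-1}1$, since the first two words in lexicographic order are $0^m$ and $0^{m-1}1$, and $M = M_d^{n_1,\ldots,n_m}$. The key observation is that the last column of~$M$ is $1^m$: indeed $n_m = 0$ leaves the last column $C_m$ of~$M_d$ unrotated, and the last column of~$M_d$ is all ones by an immediate induction on the block recursion $M_{d+1} = \left(\begin{smallmatrix} M_d & M_d \\ 0 & M_d \end{smallmatrix}\right)$. Since $M(0^{m-1}1)$ equals this last column, we get $Mw'_2 = Mz \oplus 1^m$, so the affine necklace has the form $(Mz)(Mz \oplus 1^m)$, which is exactly a word $ww'$ with $w' = w \oplus 1^m$. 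Finally, to see that every $(1,m)$-nested perfect necklace is affine, I would use that $M_d$ itself (the case $n_1 = \cdots = n_m = 0$ of Lemma~\ref{lem:invert-right-sm}) is invertible: given $s = ww'$ with $w' = w \oplus 1^m$, set $z = M_d^{-1} w$, so that $M_d z = w$ and $s = (M_d z)(M_d z \oplus 1^m)$ is the affine necklace obtained from $M_d$ and~$z$.

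I expect no serious obstacle here; the only points requiring care are verifying that the last column of the rotated matrix $M_d^{n_1,\ldots,n_m}$ is $1^m$ (which hinges on $n_m = 0$) and checking that the pairing condition is genuinely independent of the starting convention, so that the perfect-necklace requirement ``for any convention on the starting point'' is met.
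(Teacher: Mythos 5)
Your proof is correct and takes essentially the same approach as the paper's: the paper compresses the first assertion into the word ``straightforward'' and the second into the observation that the images of $0^m$ and $0^{m-1}1$ under any $M_d^{n_1,\ldots,n_m}$ are $0^m$ and $1^m$ (i.e.\ the last column is all ones), with the invertibility of the matrix left implicit. You have merely spelled out the details the paper omits, so there is nothing to change.
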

\begin{proof}
  It is straightforward that $(1,m)$-nested perfect necklaces are the
  words  of the  form stated in the lemma.  And for each matrix $M$ of the form
  $M_d^{n_1,\ldots,n_m}$, $Mw_0$ and $Mw_1$ are respectively equal to
  $0^m$ and $1^m$.  This proves the last claim.
\end{proof}

The next lemma provides the number of $(m,m)$-affine necklaces.  It shows
that $(m,m)$-affine necklaces obtained by the different choices of the matrix
$M_d^{n_1,\ldots,n_m}$ and of the vector~$z$ are indeed different.
\begin{lemma} \label{lem:mm-necklaces}
  Let $m = 2^d$ for some $d \ge 0$.  There are exactly $2^{2m-1}$
  different $(m,m)$-affine necklaces.
\end{lemma}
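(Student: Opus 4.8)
The plan is to count the number of distinct $(m,m)$-affine necklaces by counting the number of pairs $(M, z)$ that produce them, and then dividing by the size of the fibers—that is, the number of pairs that produce the same necklace. First I would count the raw number of choices. The matrices $M_d^{n_1,\ldots,n_m}$ are parametrized by the admissible sequences $n_1,\ldots,n_m$ satisfying $n_m = 0$ and $n_{i+1} \le n_i \le n_{i+1}+1$. Each such sequence is determined by choosing, for each of the $m-1$ consecutive pairs $(i,i+1)$, whether $n_i - n_{i+1}$ is $0$ or $1$; hence there are exactly $2^{m-1}$ such matrices, matching the $2^{m-1}$ displayed for $m=4$. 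The vector $z$ ranges over all $2^m$ words of length $m$. This gives $2^{m-1} \cdot 2^m = 2^{2m-1}$ pairs in total.

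**Reducing to injectivity.** Since the count of pairs already equals the target $2^{2m-1}$, the heart of the proof is to show that the map $(M,z) \mapsto \text{(affine necklace)}$ is injective: distinct pairs yield distinct necklaces. Equivalently, I would recover both $M$ and $z$ from the necklace $w = (Mw_1')\cdots(Mw_{2^m}')$. The key observation is the remark already recorded after the definition of affine necklaces: setting $z' = Mz$ gives $Mw_i' = Mw_i \oplus z'$, so the $i$-th block equals $M w_i \oplus z'$, where $w_i$ is the base-$2$ expansion of $i-1$. The very first block ($w_1 = 0^m$) is exactly $z'$, so $z'$ is read off immediately as the first $m$ symbols of the necklace. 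Then for each standard basis vector $e_j$ (the word $w_i$ with $i-1 = 2^{m-j}$), the corresponding block is $Me_j \oplus z'$, and XORing with the already-known $z'$ recovers the $j$-th column $Me_j$ of $M$. Thus the full matrix $M$ is determined by the necklace.

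**Recovering the original $(M,z)$.** It remains to pass from $(M, z')$ back to $(M, z)$. Since $M$ is now known and, by Lemma~\ref{lem:invert-right-sm} (in the case $k = 2^d$, $\ell = 0$), $M$ is invertible, the vector $z = M^{-1} z'$ is uniquely determined. Hence the pair $(M, z)$ is uniquely recovered from the necklace, so the map is injective and the number of distinct $(m,m)$-affine necklaces is exactly $2^{2m-1}$.

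**Anticipated obstacle.** The step I expect to require the most care is verifying that the parametrization of the matrices $M_d^{n_1,\ldots,n_m}$ by the increment sequence $(n_i - n_{i+1})_{i=1}^{m-1} \in \{0,1\}^{m-1}$ is genuinely a bijection onto the admissible matrices, so that the count $2^{m-1}$ is neither an over- nor undercount. In particular I would confirm that distinct admissible sequences $n_1,\ldots,n_m$ give distinct matrices—this uses that the columns of $M_d$ are pairwise distinct even after rotation, so that different rotation amounts produce genuinely different columns. The rest of the argument is a routine extraction of $z'$ and the columns of $M$ from prescribed blocks of the necklace, followed by the inversion $z = M^{-1}z'$.
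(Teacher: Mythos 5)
Your proposal is correct and follows essentially the same route as the paper: both count the $2^{m-1}\cdot 2^m = 2^{2m-1}$ pairs $(M,z)$ and then prove that the pair-to-necklace map is injective, relying on the invertibility of $M$ from Lemma~\ref{lem:invert-right-sm}. The only difference is cosmetic --- the paper recovers $z$ by locating the unique zero block of the necklace and then deduces $M=M'$ from $Mu_i=M'u_i$ as $u_i$ ranges over all vectors, whereas you read $z'=Mz$ off the first block and the columns of $M$ off the basis-vector blocks before inverting to get $z=M^{-1}z'$; both recovery arguments are valid, and your explicit attention to the fact that distinct admissible sequences $n_1,\ldots,n_m$ yield distinct matrices (via the position of the lowest nonzero entry of each rotated column) is a point the paper leaves implicit.
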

\begin{proof}
  There are exactly $2^{m-1}$ matrices $M_d^{n_1,\ldots,n_m}$.  Indeed, the
  sequence $n_1,\ldots,n_m$ is fully determined by the sequence
  $n_1-n_2,\ldots,n_{m-1}-n_m$ of $m-1$ differences which take their value
  in~$\{0,1\}$.  There are also $2^m$ possible values for the word~$z$
  in~$\FT^m$.  This proves that the number of $(m,m)$-affine necklaces
  is bounded by~$2^{2m-1}$.

  It remains to show that two $(m,m)$-affine necklaces obtained for two
  different pairs $(M,z)$ and $(M',z')$ are indeed different.  Let
  $w_1,\ldots,w_{2^d}$ be the enumeration in lexicographic order of all
  words of length~$m$ over~$\FT$. Let $M$ and $M'$ be two matrices of the
  form $M_d^{n_1,\ldots,n_m}$.  Let $z$ and~$z'$ be two words over~$\FT$ of
  length~$m$ and let $u_i = w_i \oplus z$ and $u'_i = w_i \oplus z'$ for
  $1 \le i \le 2^m$.  Let $w$ and $w'$ be the two concatenations
  $(Mu_1) \cdots (Mu_{2^d})$ and $(M'u'_1) \cdots (M'u'_{2^d})$.
  We claim that if $w = w'$, then $M = M'$ and~$z = z'$.
  
  We suppose that $w = w'$.  Since both matrices $M$ and~$M'$ are
  invertible by Lemma~\ref{lem:invert-right-sm}, $Mu_i$ (respectively
  $M'u'_i$) is the zero vector if and only if $u_i$ (respectively $u'_i$)
  is the zero vector, that is, $z = w_i$ (respectively $z' = w_i$).  It
  follows then that $z = z'$ and thus $u_i = u'_i$ for $1 \le i \le 2^m$.
  Note that the vector $u_i$ ranges over all possible vectors of
  length~$m$.  If $Mu_i = M'u_i$ for all $1 \le i \le 2^m$, then~$M = M'$.
\end{proof}

Lemma~\ref{lem:extend2} will  show how $(k,m)$-affine necklaces
can be concatenated with   $(k,m)$-affine necklaces to get  $(k+1,m)$-perfect
necklaces.  The next two lemmas are intermediate steps towards the proof.
The first states that each rotation of a column of~$M_d$ is a linear
combination of some columns to its right.
\begin{lemma} \label{lem:right-comb}
  Let $d \ge 0$ be integer and let $(C_1,\ldots,C_{2^d})$ be the columns of
  the matrix~$M_d$.  For any integers $i,k$ such that $1 \le i \le 2^d$
  and $k \ge 0$, the vector $\sigma^k(C_i) \oplus C_i$ is equal to a linear
  combination $\bigoplus_{j = i+1}^{2^d} b_jC_j$ where $b_j \in \FT$.
\end{lemma}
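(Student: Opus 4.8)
The plan is to reduce the whole statement to the single-step identity $\sigma(C_i) \oplus C_i = C_{i+1}$ for $1 \le i < 2^d$, then telescope and induct downward on~$i$. First I would establish this identity. Since $C_i$ is the $i$-th column of the upper triangular matrix~$M_d$, its entries below row~$i$ vanish; in particular its last entry is~$0$ when $i < 2^d$, so $\sigma(C_i)$ is just $C_i$ shifted down by one position, with $(\sigma(C_i))_r = (C_i)_{r-1}$ for $r \ge 2$ and $(\sigma(C_i))_1 = 0$. Hence the $r$-th entry of $\sigma(C_i)\oplus C_i$ equals $(C_i)_{r-1}\oplus(C_i)_r$ (reading $(C_i)_0 = 0$), which by the Pascal relation of Lemma~\ref{lem:pascal} is exactly the $r$-th entry of~$C_{i+1}$. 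This yields $\sigma(C_i)\oplus C_i = C_{i+1}$. For the top index $i = 2^d$ I would record separately that the last column of~$M_d$ is $1^{2^d}$, which follows by a one-line induction on~$d$ from the block form $M_{d+1} = \left(\begin{smallmatrix} M_d & M_d \\ 0 & M_d\end{smallmatrix}\right)$; since $\sigma$ fixes $1^{2^d}$, we get $\sigma^k(C_{2^d}) \oplus C_{2^d} = 0$ for all~$k$.

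Next, because $\sigma$ merely permutes coordinates it is $\FT$-linear, so $\sigma^{t-1}(\sigma(C_i)\oplus C_i) = \sigma^{t}(C_i)\oplus\sigma^{t-1}(C_i)$. Summing this equality for $t = 1,\ldots,k$, the left sides contribute $\sigma^{t-1}(C_{i+1})$ while the right sides telescope over~$\FT$, giving $\sigma^k(C_i)\oplus C_i = \bigoplus_{t=0}^{k-1}\sigma^t(C_{i+1})$. I would then prove the lemma by downward induction on~$i$, with the statement that $\sigma^k(C_i)\oplus C_i$ lies in $V_i := \mathrm{span}(C_{i+1},\ldots,C_{2^d})$ for every $k \ge 0$; the base case $i = 2^d$ is the observation above, where $V_{2^d} = \{0\}$.

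The main obstacle appears in the inductive step: the rotation $\sigma^t(C_{i+1})$ is not itself a column of~$M_d$ and has no reason a priori to lie in~$V_i$. This is resolved precisely by the induction hypothesis applied at index $i+1$. Writing each summand as $\sigma^t(C_{i+1}) = C_{i+1}\oplus(\sigma^t(C_{i+1})\oplus C_{i+1})$, the correction $\sigma^t(C_{i+1})\oplus C_{i+1}$ lies in $V_{i+1}\subseteq V_i$ by that hypothesis. Summing over~$t$, the $C_{i+1}$ terms collapse to either $0$ or $C_{i+1}$ according to the parity of~$k$, both in~$V_i$, while all corrections remain in $V_{i+1}\subseteq V_i$. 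Therefore $\sigma^k(C_i)\oplus C_i\in V_i$, that is, it is a linear combination $\bigoplus_{j=i+1}^{2^d} b_j C_j$ with $b_j\in\FT$, which completes the induction and the proof.
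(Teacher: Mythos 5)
Your proof is correct, and it follows the same overall strategy as the paper's --- reduce everything to the one-step identity $\sigma(C_i) \oplus C_i = C_{i+1}$ and then induct downward on the column index --- but both technical ingredients are implemented differently. First, to prove the key identity the paper does not use triangularity: it applies Lemma~\ref{lem:pascal} to the column $C_{i+2^d}$ of the \emph{larger} matrix $M_{d+1}$, whose first half is $C_i$ and which has period $2^d$, so that the cyclic wraparound of $\sigma$ is absorbed by the periodicity. Your observation that the last entry of $C_i$ vanishes for $i < 2^d$ (so $\sigma$ acts as a plain downward shift) is a more elementary route to the same identity. One caveat: the entrywise recurrence you invoke, $(C_{i+1})_r = (C_i)_{r-1} \oplus (C_i)_r$, is the Pascal recurrence as it actually holds for the displayed matrices $M_d$, whereas the literal index pattern in the statement of Lemma~\ref{lem:pascal} is shifted (as stated it fails, e.g., at $(i,j)=(1,2)$ for $M_2$); this is an indexing slip in the paper, not a gap in your argument, but it means you are really using the corrected form of that lemma. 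Second, where you telescope to get $\sigma^k(C_i)\oplus C_i = \bigoplus_{t=0}^{k-1}\sigma^t(C_{i+1})$ and then push every summand into $\mathrm{span}(C_{i+2},\ldots,C_{2^d})$ plus a multiple of $C_{i+1}$ via the induction hypothesis at $i+1$, the paper instead runs an inner induction on $k$: it applies $\sigma$ to both sides of the equality for exponent $k$, then replaces $\sigma(C_i)$ by $C_i\oplus C_{i+1}$ and each $\sigma(C_j)$ by $C_j$ plus a combination of columns to its right (the outer induction hypothesis at $j>i$). The two mechanisms are equivalent --- your telescoping sum makes explicit what the paper's repeated substitution produces --- so the difference is bookkeeping rather than substance: your version yields a clean closed-form identity, while the paper's avoids writing the sum at the cost of a double induction.
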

\begin{proof}
  The result is proved by the induction on the difference $2^d - i$.  If
  $i = 2^d$, the result holds trivially because
  $\sigma(C_{2^d}) = C_{2^d}$.  Assume that $i < 2^d$ is fixed.  The
  proof is now by induction on the integer~$k$.  The result for $k = 0$ is
  void.  By Lemma~\ref{lem:pascal} applied to the column $C_{i+2^d}$ of the
  matrix $M_{d+1}$, the equality $\sigma(C_i) \oplus C_i = C_{i+1}$ holds.
  We apply Lemma~\ref{lem:pascal} to the column $C_{i+2^d}$ of the
  matrix~$M_{d+1}$ because this column has period $2^d$ and its first half
  is the column~$C_i$ of~$M_d$.  This proves the result for $k = 1$.
  Suppose now that the result is true for some~$k \ge 1$.  Applying
  $\sigma$ to both terms of the equality and replacing first $\sigma(C_i)$
  by the value $C_i \oplus C_{i+1}$ and second each $\sigma(C_j)$ by the
  value given by the induction hypothesis gives the result for~$k+1$.
\end{proof}

The next lemma shows for each $(k,m)$-affine necklace, there is just one
possible way of rotating it to get another $(k,m)$-affine necklace.
\begin{lemma} \label{lem:affine-conj}
  Let $d,m,k$ and $p$ be integers such that $d \ge 0$, $m = 2^d$,
  $1 \le k \le m$ and $p \ge 0$.  Let $w$ be a $(k,m)$-affine
  necklace.  If $m$ divides $p$ and $\sigma^p(w)$ is also a $(k,m)$-affine
  necklace, then $p \equiv m2^{k-1} \mod |w|$.
\end{lemma}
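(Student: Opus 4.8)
The plan is to turn the statement into a question about when subtraction of a constant modulo $2^k$ is an $\FT$-linear operation, and then to exploit a collision between two opposite triangularities.

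First I would put $w$ into block form. Let $M = M_d^{n_1,\ldots,n_m}$ and $z$ be the data defining $w$, let $P$ be the $m\times k$ matrix formed by the last $k$ columns of $M$, and set $z'=Mz$. Since the word used to produce the $i$-th block is $0^{m-k}y$ with $y$ the $k$-bit expansion of $i-1$, the $2^k$ blocks of $w$ are $B_i = Py_i\oplus z'$, where now $y_i$ denotes the $k$-bit expansion of $i$ and $i$ runs through $0,\ldots,2^k-1$. Writing $p=mr$, the rotation $\sigma^{p}$ merely shifts these blocks cyclically by $r$, so the $i$-th block of $\sigma^{p}(w)$ is $B_{(i-r)\bmod 2^k}$. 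If $\sigma^{p}(w)$ is a $(k,m)$-affine necklace, then it equals $(M'u'_1)\cdots(M'u'_{2^k})$ for some $M'=M_d^{n'_1,\ldots,n'_m}$ and some word $z''$; writing $P'$ for the last $k$ columns of $M'$ and matching blocks gives
\[
  P\,y_{(i-r)\bmod 2^k} = P'y_i \oplus v \qquad\text{for all } i,
\]
with $v=z''\oplus z'$ a constant vector.

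Next I would factor $P$ and $P'$ through the last $k$ columns $\bar C=(C_{m-k+1},\ldots,C_m)$ of $M_d$ itself. By Lemma~\ref{lem:right-comb}, each rotated column $\sigma^{n_j}(C_j)$ equals $C_j$ plus a combination of columns strictly to its right, and for $j\ge m-k+1$ those columns all lie among $C_{m-k+1},\ldots,C_m$; hence $P=\bar C\,U$ and $P'=\bar C\,U'$ with $U,U'$ unipotent triangular $k\times k$ matrices (nonzero entries only on and below the diagonal, in the indexing whose first column is the most significant bit). Since the bottom $k\times k$ block of $\bar C$ is invertible by Lemma~\ref{lem:invert-right-sm}, $\bar C$ is injective, and the displayed identity cancels to $U\,y_{(i-r)\bmod 2^k}=U'y_i\oplus v_0$. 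As $y_i$ ranges over all of $\FT^k$, this says exactly that the map $T_r$ sending a vector $a$ to the $k$-bit expansion of $(\operatorname{val}(a)-r)\bmod 2^k$ is $\FT$-affine, with linear part $W=U^{-1}U'$, again unipotent triangular of the same (lower) shape.

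Here is the crux. On one hand $W$ is triangular of the shape produced by the column combinations above. On the other hand $W$ is the linear part of subtraction by $r$, in which a borrow propagates only from less significant toward more significant positions, so that linear part is triangular of the opposite shape. A unipotent matrix that is triangular of both shapes is the identity, so $W=I$ and $T_r$ is a pure translation $a\mapsto a\oplus w_0$. Finally, viewed as a permutation of $\{0,\ldots,2^k-1\}$, the subtraction $n\mapsto (n-r)\bmod 2^k$ decomposes into $\gcd(r,2^k)$ cycles of common length $2^{k}/\gcd(r,2^k)=2^{\,k-v_2(r)}$, so it has order $2^{\,k-v_2(r)}$, where $v_2$ is the $2$-adic valuation (with $v_2(0)=\infty$). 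A translation has order at most $2$, which forces $k-v_2(r)\le 1$, i.e.\ $r\equiv 0$ or $r\equiv 2^{k-1}\pmod{2^k}$. The case $r\equiv 0$ is exactly $\sigma^{p}(w)=w$; discarding it (the rotated word is required to be another affine necklace), we obtain $r\equiv 2^{k-1}$, whence $p=mr\equiv m2^{k-1}\pmod{m2^k}$, that is $p\equiv m2^{k-1}\pmod{|w|}$.

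The step I expect to be delicate is the reduction of the block-matching identity to the single statement ``$T_r$ is $\FT$-affine'': it depends on using Lemma~\ref{lem:right-comb} to replace the rotated matrices $P,P'$ by their common factorization through $\bar C$, and on the invertibility from Lemma~\ref{lem:invert-right-sm} to cancel $\bar C$. Once that is in place, the heart of the matter is the observation that the two triangularity constraints on $W$ come from genuinely different mechanisms -- column combinations to the \emph{right} versus borrow propagation to \emph{more significant} bits -- and are therefore opposite, so they meet only in the identity; this is precisely what eliminates every rotation except the half-turn, and the concluding order count is then routine.
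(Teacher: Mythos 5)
Your proof is correct, and it reaches the conclusion by a genuinely different route than the paper's. The paper argues locally: writing $i-1 = p/m$ and letting $\ell$ be its $2$-adic valuation, it supposes $\ell < k-1$ and computes the difference of blocks $1$ and $1+2^\ell$ of $\sigma^p(w)$ in two ways --- via the affine structure of $\sigma^p(w)$ it is a column $C'_{m-\ell}$ of $M'$, hence by Lemma~\ref{lem:right-comb} equal to $C_{m-\ell}$ plus a combination of columns strictly to its \emph{right}; via block matching with $w$ it is $M(w_i \oplus w_{i+2^\ell})$, which by carry propagation equals $C_{m-\ell}$ plus a \emph{nonzero} combination of columns strictly to its \emph{left} --- and the linear independence of the columns of $M$ (Lemma~\ref{lem:invert-right-sm}) gives the contradiction, forcing $\ell = k-1$ and hence $p/m = 2^{k-1}$. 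You instead match all $2^k$ blocks simultaneously, factor $P$ and $P'$ through the last $k$ columns of $M_d$ (your use of Lemma~\ref{lem:right-comb}), cancel that common factor by injectivity (your use of Lemma~\ref{lem:invert-right-sm}), and reduce the lemma to the clean statement that $n \mapsto (n-r) \bmod 2^k$ is $\FT$-affine with unipotent lower-triangular linear part; the opposite triangularity coming from borrow propagation then forces the linear part to be the identity, and the order count finishes. So both proofs turn on exactly the same tension (rotated columns expand rightward, arithmetic carries propagate leftward) and invoke the same two lemmas, but yours is global and structural where the paper's is a pointwise contradiction: yours isolates a reusable fact (subtraction by $r$ modulo $2^k$ is $\FT$-affine only when $2^{k-1} \mid r$) and delivers the full dichotomy $r \equiv 0$ or $r \equiv 2^{k-1} \pmod{2^k}$, at the cost of the factorization bookkeeping, while the paper's choice of the single index pair $(1, 1+2^\ell)$ reads off the valuation of $p/m$ directly. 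One remark on the endgame: your argument, like the paper's own proof (which explicitly accepts $p=0$ despite the lemma's literal wording), establishes only $p \equiv 0$ or $p \equiv m2^{k-1} \pmod{|w|}$, and the trivial rotation $\sigma^p(w)=w$ must be set aside by convention; this looseness is in the paper's statement itself and is harmless for the application in Lemma~\ref{lem:extend2}, so your explicit discarding of the case $r \equiv 0$ is the right reading.
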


\begin{proof}
  Since $|w| = m2^k$ and $\sigma^{|w|}(w) = w$, we may assume that $0 \le
  p \le  m2^k$.  The result holds if either $p = 0$ or $p = m2^k$.
  Therefore we assume that $1 \le p \le m2^k-1$.
  Let $w_1,\ldots, w_{2^d}$ be the enumeration in lexicographic order of
  all words over~$\FT$ of length~$m$.
  Since $w$ is an affine necklace, it is a concatenation
  $(Mu_1) \cdots M(u_{2^k})$ where $M$ is a matrix $M_d^{n_1,\ldots,n_m}$
  and $u_i$ is equal to $w_i \oplus z$ for each integer $1 \le i \le 2^k$
  and for some fixed vector~$z$.
  Since $\sigma^p(w)$ is also an affine necklace, it is a concatenation
  $(M'u'_1) \cdots (M'u'_{2^k})$ where $M'$ is a matrix
  $M_d^{n_1,\ldots,n_m}$ and $u'_i$ is equal to $w_i \oplus z'$ for each
  integer $1 \le i \le 2^k$ and for some other fixed vector~$z'$.  For each
  $\ell$ such that $0 \le \ell < d$, the vector
  $M'u'_1 \oplus M'u'_{1+2^\ell}$ is equal to the column $C'_{m-\ell}$ of the
  matrix~$M'$.  Since $M$ and $M'$ are two matrices of the form
  $M_d^{n_1,\ldots,n_m}$, the column $C'_{m-\ell}$ of~$M'$ is equal to
  $\sigma^t(C_{m-\ell})$ where $C_{m-\ell}$ is the corresponding column
  of~$M$ and $t$ is some integer.
 Since $m$ divides $p$, the necklace~$\sigma^p(w)$ is equal to
  \[
   (Mw'_i) \cdots M(w'_{2^k})(Mw'_1) \cdots (Mw'_{i-1})
  \] where
  $i = 1 + p/m$. We consider the word~$w_i$ which is the base~$2$ expansion
  of~$i-1$ with $m$ bits.  Let $\ell$ be the greatest integer such that
  $2^{\ell}$ divides $i-1 = p/m$.  The integer~$i-1$ is equal to
  $2^{\ell}(2r+1)$ for some non-negative integer~$r$.  
   We claim that~$\ell = k-1$.

  Suppose by contradiction that $\ell < k-1$.  The integer~$r$ satisfies
  thus $r \ge 1$.  The word~$w_i$ is then equal to $0^{m-k}u10^{\ell}$
  where $0^{m-k}$ is the block leading zeros due to $i \le 2^k$ and $u$ is
  the base~$2$ expansion of~$r$ with $k-\ell-1$ digits.  We now consider
  the word $w_{i+2^{\ell}}$.  This word is equal to $0^{m-k}u'0^{\ell+1}$
  where $u'$ is the base~$2$ expansion of~$r+1$ with $k-\ell-1$ digits.
  Computing $Mu_i \oplus Mu_{i+2^{\ell}}$ gives $C_{2^d-\ell} + R$ where
  $R$ is a non-zero linear combination of $C_{m-k},\ldots, C_{m-\ell-1}$.
  This linear combination~$R$ cannot be equal to zero because the words $u$
  and~$u'$ are different.  The vector $Mu_i \oplus Mu_{i+2^{\ell}}$ is also
  equal to
  $M'u'_1 \oplus M'u'_{1+2^{\ell}} = C'_{m-\ell} = \sigma^t(C_{m-\ell})$.
  By Lemma~\ref{lem:right-comb}, this vector is equal to $C_{m-\ell} + R'$
  where $R'$ is a linear combination of $C_{m-\ell+1},\ldots,C_m$.  This is
  a contradiction: the equality $R = R'$ is impossible because,  
  by Lemma~\ref{lem:invert-right-sm},
  the matrix~$M$ is invertible.
\end{proof}

We are now ready to show that each $(k,m)$-affine necklace can be extended
by at most two $(k,m)$-affine necklaces to get a $(k+1,m)$-perfect necklace.

\begin{lemma} \label{lem:extend2}
  Let $m = 2^d$ for some $d \ge 0$ and let $k$ be an integer such that
  $1 \le k \le m$.  Let $w$ be a $(k,m)$-affine necklace.  There are at
  most two $(k,m)$-affine necklaces~$w'$ such that $ww'$ is a
  $(k+1,m)$-perfect necklace.
\end{lemma}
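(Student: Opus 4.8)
The plan is to count, for a fixed $(k,m)$-affine necklace $w$, how many affine necklaces $w'$ can extend it to a $(k+1,m)$-perfect necklace, and show this count is at most two. The natural strategy is to leverage Lemma~\ref{lem:affine-conj}, which pins down the unique rotation relating two affine necklaces, together with the counting in Lemma~\ref{lem:mm-necklaces} and the perfectness constraints.

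Concretely, I would proceed as follows. First, by Lemma~\ref{lem:xor-npn} and the remark that $Mz$ absorbs into the vector offset, it may be assumed without loss of generality that $w$ is built from the zero vector $z = 0^m$, so $w = (Mw_1)\cdots(Mw_{2^k})$ for the lexicographic enumeration $w_1,\ldots,w_{2^k}$ and some matrix $M = M_d^{n_1,\ldots,n_m}$. Any candidate $w'$ is itself a $(k,m)$-affine necklace, hence $w' = (M'u'_1)\cdots(M'u'_{2^k})$ for some matrix $M'$ and offset $z'$. The key observation is that the concatenation $ww'$, having length $m2^{k+1}$, is a $(k+1,m)$-perfect necklace exactly when every length-$(k+1)$ word occurs once per residue class modulo $m$ across the whole of $ww'$; since $w$ and $w'$ are each already $(k,m)$-perfect, the only genuinely new constraints come from the occurrences of length-$(k+1)$ words that straddle the junction between $w$ and $w'$ (and wrap around, since a necklace is cyclic). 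I would extract from these boundary/wrap-around conditions a system of linear equations over $\FT$ relating $M'$ and $z'$ to the fixed data $M$.

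The main technical step is to show this system leaves at most two solutions. I expect to argue that the cyclic structure of a $(k+1,m)$-perfect necklace forces $w'$ to be, up to the offset, essentially a rotation or a specific reshuffling of the affine data of $w$: if $ww'$ is $(k+1,m)$-perfect then rotating $ww'$ by $m2^k$ (one half) must again produce a $(k+1,m)$-perfect necklace whose first half $w'$ is affine, and Lemma~\ref{lem:affine-conj} severely restricts how an affine necklace can be rotated into another affine one—the only admissible shift is $m2^{k-1}$ modulo the length. Combining this rotational rigidity with the requirement that the new length-$(k+1)$ words be distributed perfectly, I would show that $M'$ is determined (matching $M$ on the relevant columns via Lemma~\ref{lem:right-comb}) and that $z'$ satisfies a linear condition leaving a one-dimensional, hence two-element, affine solution space. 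That final count of $2 = |\FT^1|$ is exactly the ``at most two'' in the statement.

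\textbf{The hard part} will be making the junction analysis precise: correctly separating the length-$(k+1)$ occurrences that lie entirely within $w$ or within $w'$ (automatically handled by $(k,m)$-perfectness of each half) from the straddling ones, and translating the cyclic perfectness condition into a clean linear system. In particular, controlling the wrap-around occurrences—those whose prefix sits at the end of $w'$ and whose suffix sits at the start of $w$—requires care, since these couple the offset $z'$ of $w'$ back to the first columns of $M$. I would manage this by expressing each straddling occurrence as an equation $u = N'x \oplus P'y$ with sub-matrices selected exactly as in the proof of Proposition~\ref{pro:aff2nes}, invoking invertibility of the relevant square blocks from Lemmas~\ref{lem:invert-right-sm} and~\ref{lem:invert-top-sm} to reduce the count of free parameters, and finally appealing to Lemma~\ref{lem:affine-conj} to rule out all but the two admissible configurations.
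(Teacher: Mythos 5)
Your proposal has a genuine gap, and it begins at the sentence claiming that, since $w$ and $w'$ are each $(k,m)$-perfect, ``the only genuinely new constraints come from the occurrences of length-$(k+1)$ words that straddle the junction.'' This is false: $(k,m)$-perfectness of each half controls only the length-$k$ factors, and says nothing about how the length-$(k+1)$ factors of that half are distributed over residues modulo $m$. The dominant constraint for $ww'$ to be $(k+1,m)$-perfect is global, not local to the junction: the set of pairs (length-$(k+1)$ factor, residue class mod $m$) occurring inside $w'$ must be essentially the complement of the set occurring inside $w$, because each such pair must occur exactly once in the whole of $ww'$. A system of $O(k)$ linear equations extracted from the straddling and wrap-around positions cannot encode this complementarity, and hence cannot cut the $2^{2m-1}$ candidate pairs $(M',z')$ down to two.

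The missing idea, which is the heart of the paper's proof, is an argument that any two valid extensions $w'$ and $w''$ must be rotations of one another by a multiple of $m$. The paper obtains this from the graph characterization of perfect necklaces in~\cite{AlBeFeYu2016}: in the de Bruijn-like graph $G_k$ with vertex set $\FT^k \times \{1,\ldots,m\}$, a $(k,m)$-perfect necklace is a Hamiltonian cycle, a $(k+1,m)$-perfect necklace is an Eulerian cycle, and every vertex has out-degree two. The cycle $C$ induced by $w$ uses one outgoing edge at every vertex, so any valid extension must induce a Hamiltonian cycle inside $G_k \setminus C$, a graph with out-degree one at every vertex; consequently all valid extensions trace the same closed walk and differ only in their starting vertex, which must lie in $\FT^k \times \{1\}$, i.e.\ they are rotations $\sigma^p$ of each other with $m \mid p$. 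Only at that point does Lemma~\ref{lem:affine-conj} apply, giving $p \in \{0, m2^{k-1}\}$ and hence at most two affine extensions. Note also that Lemma~\ref{lem:affine-conj} compares an affine necklace with its \emph{own} rotations; your third paragraph instead rotates $ww'$ by half its length, which relates $w'$ to the position of $w$ but never relates two candidate extensions $w'$ and $w''$ to each other, so the lemma has nothing to act on. Your closing appeal to Lemma~\ref{lem:affine-conj} is indeed the right final step, but without the uniqueness-up-to-rotation argument supplied by the graph (or some equivalent replacement), the count of two cannot be reached.
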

\begin{proof}
  We use the characterization of $(k,m)$-perfect necklaces as cycles in
  appropriate graphs $G_k$ (which variant of de Bruijn graphs) given
  in~\cite{AlBeFeYu2016}.  Consider the directed graph~$G_k$ whose vertex
  set is $\FT^k \times \{ 1, \ldots, m\}$ and whose transitions are defined
  as follows.  There is an edge in~$G_k$ from $(u,i)$ to $(u',i')$ if first
  there two symbols $a$ and~$b$ in~$\FT$ such that $ua = bu'$ and second
  $i' \equiv i+1 \mod m$.  The condition on $u$ and~$u'$ means that $u$
  and~$u'$ are respectively the prefix and the suffix of length~$k$ of the
  word $v = ua = bu'$ of length~$k+1$.  Therefore, the edges of the
  graph~$G_k$ can be identified with the words of length~$k+1$ over~$\FT$.
  Note that each vertex of~$G_k$ has two incoming and two outgoing edges.
    It follows from the definition of the graph~$G_k$, that each
  $(k,m)$-nested perfect necklace is %the same as a 
   identified to a Hamiltonian cycle  in~$G_k$ and that each $(k+1,m)$-nested 
  perfect necklaces is % the same as an
  identified to an  Eulerian cycle in~$G_k$.  

  Let $w$ be a $(k,m)$-affine necklace. Then $w$ determines a Hamiltonian
  cycle~$C$ in~$G_k$.  Since $C$ visits each node of~$G_k$ exactly once, it
  uses one outgoing edge of each node.  Any $(k,m)$-nested perfect
  necklace~$w'$ such that $ww'$ is a $(k+1,m)$-nested perfect necklace
  induces an Hamiltonian~$C'$ cycle which cannot use an edge of~$C$.
  Otherwise, it would not be possible to build an Eulerian cycle from $C$
  and~$C'$ and $ww'$ would not be a $(k+1,m)$-nested perfect necklace.  If
  there is no $(k,m)$-affine necklace~$w'$ such that $ww'$ is a
  $(k+1,m)$-nested perfect necklace the lemma trivially holds.  Suppose now
  that there exists at least one such~$w'$.  Since the graph
  $G_k \setminus C$ has only one outgoing edge from any vertex, any
  $(k,m)$-nested perfect necklace $w''$ such that such that $ww''$ is a
  $(k+1,m)$-nested perfect necklace must be of the form $\sigma^p(w')$ for
  some integer~$p \ge 0$.  Since both Hamiltonian cycles $C'$ and~$C''$
  induced by $w'$ and~$w''$ must must start from a vertex in
  $\FT^k \times \{1\}$, it follows that $m$ divides $m$.  By
  Lemma~\ref{lem:affine-conj}, the only possible choices for $p$ are $0$
  and~$m2^{k-1}$.  This proves that there is at most one such~$w''$
  different from~$w'$.
\end{proof}

We can now give the  number of $(k,m)$-affine necklaces.
\begin{proposition} \label{prop:nbr-nested}
  Let $m = 2^d$ for some $d \ge 0$.  For each integer~$k$ such that
  $1 \le k \le m$, the number of $(k,m)$-affine necklaces is exactly
  $2^{k+m-1}$.
\end{proposition}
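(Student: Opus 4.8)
The plan is to set $N_k$ equal to the number of $(k,m)$-affine necklaces and to prove the statement by a squeezing argument between the two endpoints $k=1$ and $k=m$, which are already under control: Lemma~\ref{lem:1m-necklaces} gives $N_1 = 2^m = 2^{1+m-1}$ and Lemma~\ref{lem:mm-necklaces} gives $N_m = 2^{2m-1} = 2^{m+m-1}$. The key observation is that it suffices to prove the \emph{one-sided} recursion $N_{k+1} \le 2N_k$ for $1 \le k < m$; combined with the two endpoints this will force equality everywhere, so that no separate lower bound has to be established.

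First I would set up the halving map $\phi$ sending a $(k+1,m)$-affine necklace to its prefix of length $m2^k$. If $W = (Mw'_1)\cdots(Mw'_{2^{k+1}})$ with $w'_i = w_i\oplus z$, then its first half $(Mw'_1)\cdots(Mw'_{2^k})$ is literally the $(k,m)$-affine necklace obtained from the same pair $(M,z)$, so $\phi$ lands among the $(k,m)$-affine necklaces. I would also record that the \emph{second} half of $W$ is again $(k,m)$-affine: writing $i = 2^k+j$ one has $w_{2^k+j} = w_j \oplus e$, where $e$ is the word with a single~$1$ in position~$m-k$, whence $Mw'_{2^k+j} = Mw'_j \oplus C_{m-k}$ with $C_{m-k}$ the $(m-k)$-th column of~$M$; thus the second half is the affine necklace from the pair $(M,\, z \oplus M^{-1}C_{m-k})$, using that $M$ is invertible by Lemma~\ref{lem:invert-right-sm}.

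With these facts the bound $N_{k+1} \le 2N_k$ follows quickly. Fix a $(k,m)$-affine necklace~$w$ and consider a preimage $W = ww' \in \phi^{-1}(w)$. Its second half $w'$ is $(k,m)$-affine by the previous paragraph, and $W$ is a $(k+1,m)$-affine necklace, hence a $(k+1,m)$-nested perfect, and in particular a $(k+1,m)$-perfect, necklace by Proposition~\ref{pro:aff2nes}. Therefore $w'$ is one of the at most two $(k,m)$-affine necklaces extending~$w$ provided by Lemma~\ref{lem:extend2}. Distinct preimages of~$w$ have distinct second halves, so $|\phi^{-1}(w)| \le 2$, and since $N_{k+1} = \sum_w |\phi^{-1}(w)|$ ranges over all $(k,m)$-affine necklaces, this gives $N_{k+1} \le 2N_k$.

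Finally the squeeze: iterating $N_{k+1}\le 2N_k$ from the base value $N_1 = 2^m$ yields $N_k \le 2^{k+m-1}$ for every $k$, and in particular the chain $N_m \le 2N_{m-1} \le 4N_{m-2} \le \cdots \le 2^{m-1}N_1 = 2^{2m-1}$. Since Lemma~\ref{lem:mm-necklaces} asserts $N_m = 2^{2m-1}$, every inequality in this chain must be an equality; hence $N_{k+1} = 2N_k$ for all $1 \le k < m$ and $N_k = 2^{k-1}N_1 = 2^{k+m-1}$, as claimed. I expect the only delicate point to be the bound $N_{k+1}\le 2N_k$, and within it the verification that the second half of an affine necklace is again affine, together with the correct invocation of Lemma~\ref{lem:extend2} (noting that ``affine'' implies ``perfect'' but not conversely, so Proposition~\ref{pro:aff2nes} is needed to apply that lemma). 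The pleasant feature of the squeezing argument is that it sidesteps the harder task of directly exhibiting two affine extensions of each~$w$: the matching endpoints do that work for us.
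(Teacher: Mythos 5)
Your proof is correct and follows essentially the same route as the paper's: both pin down the endpoints $t_1=2^m$ and $t_m=2^{2m-1}$ via Lemmas~\ref{lem:1m-necklaces} and~\ref{lem:mm-necklaces}, derive $t_{k+1}\le 2t_k$ from Lemma~\ref{lem:extend2}, and conclude by forcing equality throughout the chain. The only difference is one of detail: you explicitly verify what the paper leaves implicit, namely that both halves of a $(k+1,m)$-affine necklace are themselves $(k,m)$-affine (the second half coming from the pair $(M,\,z\oplus M^{-1}C_{m-k})$) and that Proposition~\ref{pro:aff2nes} is needed so that Lemma~\ref{lem:extend2} applies, which is a welcome tightening rather than a deviation.
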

\begin{proof}
  We assume the integer~$m$ to be fixed and we let $t_k$ denote the number
  of $(k,m)$-affine necklaces.  By Lemma~\ref{lem:1m-necklaces}, $t_1$ is
  equal to~$2^m$ and by Lemma~\ref{lem:mm-necklaces}, $t_m$ is equal
  to~$2^{2m-1}$.  It follows from Lemma~\ref{lem:extend2} that
  $t_{k+1} \le 2 t_k$ for each integer~$k$ such that $1 \le k < m$.  None of
  these inequalities can be strict because otherwise $t_m$ would be
  striclty less that $2^{2m-1}$.
   %  It follows then that,
  So,  for each integer  $k$ such that 
  $1 \le k \le m$,  $t_{k+1} = 2t_k$,   hence, $t_k = 2^{k+m-1}$.
\end{proof}

The results above allows us to prove the wanted inclusion.
\begin{proposition} \label{pro:nes2aff}
  Each $(k,m)$-nested perfect necklace is a $(k,m)$-affine necklace.
\end{proposition}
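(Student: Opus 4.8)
The plan is to combine the inclusion already in hand with a cardinality count. By Proposition~\ref{pro:aff2nes} every $(k,m)$-affine necklace is a $(k,m)$-nested perfect necklace, and by Proposition~\ref{prop:nbr-nested} there are exactly $2^{k+m-1}$ of the former. Hence it suffices to prove that the number of $(k,m)$-nested perfect necklaces is at most $2^{k+m-1}$: the two finite sets will then have equal cardinality, and since one is contained in the other they must coincide. I would establish this upper bound by induction on~$k$, with $m = 2^d$ fixed throughout.

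For the base case $k=1$, Lemma~\ref{lem:1m-necklaces} already identifies the $(1,m)$-nested perfect necklaces as the $2^m = 2^{1+m-1}$ words of the form $ww'$ with $w' = w \oplus 1^m$, and states that they are all affine, so the bound holds with equality. For the inductive step I would assume the statement for $k-1$, that is, every $(k-1,m)$-nested perfect necklace is affine, so that by Proposition~\ref{prop:nbr-nested} there are exactly $2^{k+m-2}$ of them. The crucial move is to exploit the recursive definition of nested perfect necklaces: any $(k,m)$-nested perfect necklace $w$ factorizes uniquely as $w = w_1 w_2$, where $w_1$ and~$w_2$ have length $m 2^{k-1}$ and each is a $(k-1,m)$-nested perfect necklace, while $w$ itself is a $(k,m)$-perfect necklace.

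Since the map $w \mapsto (w_1,w_2)$ is injective, it remains to bound the number of admissible pairs. Here the induction hypothesis does the essential work: both $w_1$ and~$w_2$ are $(k-1,m)$-affine necklaces, so for a fixed $w_1$ the number of valid~$w_2$ is controlled by Lemma~\ref{lem:extend2}, which guarantees at most two $(k-1,m)$-affine necklaces~$w_2$ with $w_1 w_2$ a $(k,m)$-perfect necklace. Summing over the $2^{k+m-2}$ choices of~$w_1$ yields at most $2 \cdot 2^{k+m-2} = 2^{k+m-1}$ pairs, hence at most $2^{k+m-1}$ nested perfect necklaces, completing the induction and the proof.

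I expect the main subtlety to lie in the inductive bootstrapping rather than in any single computation: the ``at most two extensions'' property is only available for \emph{affine} necklaces (Lemma~\ref{lem:extend2}, which itself rests on the conjugacy analysis of Lemma~\ref{lem:affine-conj}), and no direct analogue is proved for arbitrary nested perfect necklaces. Carrying the statement for $k-1$ as the induction hypothesis is precisely what lets me upgrade the two halves $w_1,w_2$ to affine necklaces before invoking Lemma~\ref{lem:extend2}; without it one would have to re-prove an extension bound for nested necklaces directly, which is exactly what the cardinality strategy is designed to avoid. I would also take care that the factorization $w = w_1 w_2$ is genuinely forced by the recursive definition, so that the counting map is injective and no nested perfect necklace escapes the bound.
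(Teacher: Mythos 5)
Your proof is correct and takes essentially the same approach as the paper: both argue by induction on $k$, factor each $(k,m)$-nested perfect necklace into two $(k-1,m)$-nested halves, use the induction hypothesis to upgrade those halves to affine necklaces, and then apply Lemma~\ref{lem:extend2} together with the exact count $2^{k+m-1}$ from Proposition~\ref{prop:nbr-nested} to close the cardinality gap. The only difference is presentational: the paper phrases the induction as the equality of the two cardinalities $s_k = t_k$ rather than as an upper bound on the number of nested perfect necklaces.
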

\begin{proof}
  Fix $m$, let  $s_k$ be  the number  of $(k,m)$-nested perfect necklaces 
 and let $t_k$ be the number of  $(k,m)$-affine necklaces.  
  By Proposition~\ref{pro:aff2nes},
  $s_k \le t_k$ holds for each integer~$k$ such that $1 \le k \le m$.  To
  prove the statement, it suffices to prove that $s_k = t_k$ for each
  integer~$k$ such that $1 \le k \le m$.  
  %The proof of this equality is  carried out by induction on~$k$.  
  We prove it by induction on $k$. By Lemma~\ref{lem:1m-necklaces},
  $s_1 = t_1 = 2^m$.  We now suppose $s_k = t_k$ and we prove that
  $s_{k+1} = t_{k+1}$.  Each $(k+1,m)$-nested perfect necklace can be
  written as $ww'$ where $w$ and~$w'$ are two $(k,m)$-nested perfect
  necklaces.  Since $s_k = t_k$, $w$ and~$w'$ are also $(k,m)$-affine
  necklaces.  By Lemma~\ref{lem:extend2}, there are at most two possible
  choices of~$w'$ for each~$w$.  This proves that $t_{k+1} \le 2t_k$.
  Since $s_{k+1} = 2s_k$ by Proposition~\ref{prop:nbr-nested} and
  $s_{k+1} \le t_{k+1}$, the equality $s_{k+1} = t_{k+1}$ holds.
\end{proof}

\section*{Acknowledgements}

The authors are members of the Laboratoire International Associ\'e INFINIS,
CONICET/Universidad de Buenos Aires--CNRS/Universit\'e Paris Diderot
and they are partially supported by the ECOS project PA17C04.
Carton is partially funded by the DeLTA project (ANR-16-CE40-0007).

\bibliographystyle{plain}
\bibliography{necklaces}

\bigskip
\bigskip

{\small
\begin{minipage}{\textwidth}
\noindent
Ver\'onica Becher \\
Departamento de  Computaci\'on,
Facultad de Ciencias Exactas y Naturales \& ICC \\
Universidad de Buenos Aires \&  CONICET, Argentina \\
vbecher@dc.uba.ar
\bigskip\\
Olivier Carton \\
Institut de Recherche en Informatique Fondamentale \\
Universit\'e Paris Diderot, France \\
Olivier.Carton@irif.fr
\end{minipage}
}

\end{document}